\definecolor{Red}{rgb}{0.7,0,0.1}
\definecolor{Green}{rgb}{0,0.7,0}
\def\url@leostyle{%
 \@ifundefined{selectfont}{\def\UrlFont{\sf}}{\def\UrlFont{\scriptsize\ttfamily}}} \makeatother\urlstyle{leo}
\newtheorem{theorem}{Theorem}
\theoremstyle{definition}
\theoremstyle{remark}
\numberwithin{equation}{section}
\numberwithin{theorem}{section}
\def\cB{\mathcal{B}}
\def\cF{\mathcal{F}}
\def\cH{\mathcal{H}}
\def\cK{\mathcal{K}}
\def\cL{\mathcal{L}}
\def\cP{\mathcal{P}}
\def\cR{\mathcal{R}}
\def\bE{\mathbb{E}}
\def\bN{\mathbb{N}}
\def\bP{\mathbb{P}}
\def\bQ{\mathbb{Q}}
\def\bR{\mathbb{R}}
\newcommand{\1}{\mathbbm{1}}                     
\newcommand{\set}[1]{\{#1\}}            
\renewcommand{\d}{\operatorname{d}\!}   
\title{A note on error estimation for hypothesis testing problems \\for some linear SPDEs}
\author{
Igor Cialenco \\
\small  Department of Applied Mathematics \\[-0.6ex]
\small  Illinois Institute of Technology \\[-0.6ex]
\small  10 West 32nd Str, Bld E1, Room 208 \\[-0.6ex]
\small  Chicago, IL 60616-3793  \\[-0.6ex]
\small  \url{igor@math.iit.edu}
\and 
 Liaosha Xu\\
\small  Department of Applied Mathematics \\[-0.6ex]
\small  Illinois Institute of Technology \\[-0.6ex]
\small  10 West 32nd Str, Bld E1, Room 208 \\[-0.6ex]
\small  Chicago, IL 60616-3793  \\[-0.6ex]
\small  \url{lxu29@hawk.iit.edu}
}
\date{First Circulated: December 15, 2013\\ This version: July 15, 2014\\
Forthcoming in Stochastic Partial Differential Equations: Analysis and Computations}
\begin{document}
\maketitle

\begin{abstract}
\noindent
The aim of the present paper is to estimate and control the Type I and Type II errors of a simple hypothesis testing problem of the drift/viscosity coefficient for stochastic fractional heat equation driven by additive noise. Assuming that one path of the first $N$ Fourier modes of the solution is observed continuously over a finite time interval $[0,T]$, we propose a new class of rejection regions and provide computable thresholds for $T$, and $N$, that guarantee that the statistical errors are smaller than a given upper bound. The considered tests are of likelihood ratio type. The main ideas, and the proofs, are based on sharp large deviation bounds. Finally, we illustrate the theoretical results by numerical simulations.

%
%
%

\bigskip
{\noindent \small
{\it \bf Keywords:} Hypothesis testing for SPDE; Likelihood Ratio; Maximum Likelihood Estimator; error estimates; sharp large deviation; fractional heat equation; additive space-time white noise.

\smallskip
\noindent {\it \bf MSC2010:} 60H15, 35Q30, 65L09}


\end{abstract}

\section{Introduction}


Under assumption that one path of the first $N$ Fourier modes of the solution of a Stochastic Partial Differential Equation (SPDE) is observed continuously over a finite time interval, the parameter estimation problem for the drift coefficient  has been studied by several authors, starting with the seminal paper \cite{HubnerRozovskiiKhasminskii}. Consistency and asymptotic normality of the MLE type estimators are well understood, at least for equations driven by additive noise; see for instance the survey paper \cite{Lototsky2009Survey} for linear SPDEs, and \cite{IgorNathanAditiveNS2010} for nonlinear equations, and references therein. Generally speaking, the statistical inference theory for SPDEs did not go far beyond the fundamental properties of MLE estimators, although important and interesting classes of SPDEs driven by various noises were studied.
The first attempt to study hypothesis testing problem for SPDEs is due to \cite{CialencoXu2013a}, where we investigated the simple hypothesis for the drift/viscosity coefficient for stochastic fractional heat equation driven by additive noise, white in time and colored in space.
Therein, the authors established `the proper asymptotic classes' of tests for which we can find `asymptotically the most powerful tests' -- tests with fastest speed of error convergence.
Moreover, we provided explicit forms of such tests in two asymptotic regimes: large time asymptotics $T\to\infty$, and increasing number of Fourier modes $N\to\infty$.
By its very nature, the theory developed in \cite{CialencoXu2013a} is based on asymptotic behavior, $T,N\to\infty$, and a follow-up question is how large $T$ or $N$ should we take, such that the Type~I and Type~II errors of these tests are smaller than a given threshold.
The main goal of this paper is to develop feasible methods to estimate and control the Type I and Type II errors when $T$ and $N$ are finite.
Similar to \cite{CialencoXu2013a}, we are interested in Likelihood Ratio type rejection regions $R_{T}=\{U_T^N: \ln L(\theta_0,\theta_1,U_T^N)\ge\eta T\}$ and $R_{N}=\{U_T^N: \ln L(\theta_0,\theta_1,U_T^N)\ge \zeta M_N\}$, where $U_T^N$ is the projected solution on the space generated by the first $N$ Fourier modes, $L$ is the Likelihood Ratio, $M_N$ is a constant that depends on the first $N$ eigenvalues of the Laplacian, and $\eta,\zeta$ are some constants that depend on $T$ and $N$.
We will derive explicit expression for $\eta$ and $\zeta$, and thresholds for $T$, and respectively for $N$, that will guarantee that the corresponding statistical errors are smaller than a given upper bound. However, this comes at the cost that these tests are no longer the most powerful in the class of tests proposed in \cite{CialencoXu2013a}.
The key ideas, and the proofs of main results, are based on  sharp large deviation principles (both in time and spectral spatial component) developed in \cite{CialencoXu2013a}.
On top of the theoretical part, we also present some numerical experiments as a coarse verification of the main theorems. We find some bounds for the numerical approximation errors, that  will also serve as a preliminary effort in studying the statistical inferences problems for SPDEs under discrete observations.
Finally, we want to mention that the case of large $T$ and $N=1$ corresponds to classical one dimensional  Ornstein-Uhlenbeck process, and even in this case, to our best knowledge, the obtained results are novel.



The paper is organized as follows.  In Section~\ref{sec:1-1} we set up the problem, introduce some necessary notations, and discuss why for the tests proposed in \cite{CialencoXu2013a} it is hard to find explicit expressions for $T$ and $N$ in order to control the statistical errors.
Since sharp large deviation principles from \cite{CialencoXu2013a}  play fundamental role in the derivation of main results, in Section~\ref{sec:1-2} we briefly present them here too.  Section~\ref{sec:largeT} is devoted to the case of large time asymptotics, with number of observable Fourier modes $N$ being fixed. We show how to choose $T$ and $\eta$ such that both Type~I and Type~II errors, associated with rejection region $R_T$, are bounded by a given threshold. Similarly, in Section~\ref{sec:largeN} we study the case of large $N$ while keeping the time horizon $T$ fixed.
In Section~\ref{sec:NumericalSim} we illustrate the theoretical results by means of numerical simulations.
We start,  with the description of the numerical methods, and derive some error bounds of the numerical approximations.
Consequently, we show that while the thresholds for $T,N$ derived in Sections~\ref{sec:largeT} and \ref{sec:largeN} are conservative, as one may expect, they still provide a robust practical framework for controlling the statistical errors.
Finally, in Section~\ref{sec:Conclusion} we discuss the advantages and drawbacks of the current results and briefly elaborate on possible theoretical and practical methods of solving some of the open problems.


\subsection{Setup of the problem and some auxiliary results}\label{sec:1-1}
In this section we will set up the main equation, briefly recall the problem settings of hypothesis testing for the drift coefficient, and present some needed results from  \cite{CialencoXu2013a}. Also here we give the motivations that lead to the proposed problems.

Similar to \cite{CialencoXu2013a}, on a filtered probability space $(\Omega,\cF,\set{\cF_t}_{t\geq0},\bP)$ we considered the following stochastic evolution equation
\begin{equation}\label{eq:mainSPDE}
\d U(t,x) + \theta (-\Delta)^\beta U(t,x)\d t = \sigma \sum_{k\in\bN} \lambda_k^{-\gamma}h_k(x)\d w_k(t), \quad t\in[0,T], \ U(0,x) = U_0, \ x\in G,
\end{equation}
where $\theta>0$, $\beta>0, \ \gamma \geq 0$, $\sigma\in\bR\setminus\set{0}$, $U_0\in H^s(G)$ for some $s\in\bR$,
$w_j$'s are independent standard Brownian motions, $G$ is a bounded and smooth domain in $\bR^d$, $\Delta$ is the Laplace operator on $G$ with zero boundary conditions,
$h_k$'s are eigenfunctions of $\Delta$. It is well known that $\set{h_k}_{k\in\bN}$ form a complete orthonormal system in $L^2(G)$.
We denote by $\rho_k$ the eigenvalue corresponding to $h_k$, and put $\lambda_k:=\sqrt{-\rho_k}, \ k\in\bN$.
Under some fairly general assumptions, equation \eqref{eq:mainSPDE} admits a unique solution in the appropriate Sobolev spaces (see for instance \cite{CialencoXu2013a}).

We assume that all parameters are known, except the drift/viscosity coefficient $\theta$ which is the parameter of interest, and we use the spectral approach (for more details see the survey paper \cite{Lototsky2009Survey}) to derive MLE type estimators for $\theta$.
In what follows, we denote by $u_k,k\in\bN,$ the Fourier coefficient of the solution $u$ of \eqref{eq:mainSPDE} with respect to $h_k,k\in\bN$, i.e. $u_k(t) = (U(t),h_k)_0, k\in\bN$.
Let $H_N$ be the finite dimensional subspace of $L_2(G)$ generated by $\set{h_k}_{k=1}^N$, and denote by $P_N$ the projection operator of $L_2(G)$ into $H_N$, and put $U^N = P_NU$, or equivalently $U^N:=(u_1,\ldots,u_N)$. Note that each Fourier mode $u_k,k\in\bN$, is an Ornstein–Uhlenbeck process with dynamics given by
\begin{equation}\label{eq:OU-Fourier}
\d u_k = -\theta \lambda_k^{2\beta} u_k\d t + \sigma \lambda_k^{-\gamma} \d w_k(t), \quad  u_k(0) = (U_0,h_k), \ t\geq 0.
\end{equation}
We denote by $\bP^{N,T}_{\theta}$ the probability measure on $C([0,T]; H_N)\backsimeq C([0,T]; \bR^N)$ generated by $U^N$.
The measures $\bP^{N,T}_{\theta}, \ \theta>0$, are equivalent to each other, with the Radon-Nikodym derivative, or the Likelihood Ratio, of the form
\begin{align}\label{eq:RadonNikodymUn}
L(\theta_0,\theta;U^N_T)=\frac{\bP^{N,T}_{\theta}}{\bP^{N,T}_{\theta_{0}}} =
\exp\Big(&-(\theta-\theta_0)\sigma^{-2}\sum_{k=1}^N\lambda_k^{2\beta+2\gamma}\big(\int_0^Tu_k(t)du_k(t)  \nonumber \\
&\qquad +\frac{1}{2}(\theta+\theta_0)\lambda_k^{2\beta}\int_0^Tu_k^2(t)dt\big)\Big),
\end{align}
where $U_T^N$ denotes the trajectory of $U_N$ over the time interval $[0,T]$.
Maximizing the Log of the Likelihood Ratio with respect to $\theta$, we get the Maximum Likelihood Estimator (MLE)
\begin{equation}\label{eq:MLE-UN}
\hat{\theta}_T^N = -\frac{\sum_{k=1}^{N}\lambda_k^{2\beta+2\gamma}\int_0^T u_k(t)du_k(t)}{\sum_{k=1}^{N}\lambda_k^{4\beta+2\gamma}\int_0^T u_k^2(t)dt},
\quad N\in\bN, T>0.
\end{equation}
In \cite{CialencoXu2013a}, we established the strong consistency and asymptotic normality of the MLE, when $T$ or $N$ goes to infinity.

In this work we consider a simple hypothesis testing problem for $\theta$, assuming that the parameter $\theta$ can take only two values $\theta_0,\theta_1$,
with  the null and the alternative hypothesis as follows
\begin{align*}
\mathscr{H}_0 & :\quad \theta=\theta_0, \\
\mathscr{H}_1 &:\quad \theta=\theta_1.
\end{align*}
Without loss of generality,  we will assume that $\theta_1>\theta_0$, and $\sigma>0$.
Throughout, we fix a significant level $\alpha\in(0,1)$.
Suppose that $R\in\cB(C([0,T];\bR^N))$ is a rejection region for the test, i.e. if  $U_T^N\in R$ we reject the null and accept the alternative.
The quantity $\bP_{\theta_0}^{N,T}(R)$ is called the Type~I error of the test $R$, and respectively $1-\bP_{\theta_1}^{N,T}(R)$ is called the Type~II error.
Naturally, we seek rejection regions with Type~I error  smaller than the significance level $\alpha$:
$$
\mathcal{K}_{\alpha}:=\left\{R\in\cB(C([0,T];\bR^N)): \bP^{N,T}_{\theta_0}(R)\leq\alpha\right\}.
$$
Let us denote by $R^*$ the rejection region (likelihood ratio test) of the form
$$
R^*=\{U_T^N: L(\theta_0,\theta_1,U_T^N)\ge c_{\alpha}\},
$$
where $c_\alpha\in\bR$, such that $\bP^{N,T}_{\theta_0}(L(\theta_0,\theta_1,U_T^N)\ge c_{\alpha})=\alpha$.
In \cite{CialencoXu2013a} we proved that $R^*$ is the most powerful test (has the smallest Type~II error) in the class $\cK_\alpha$,
$$
\bP^{N,T}_{\theta_1}(R)\le\bP^{N,T}_{\theta_1}(R^*),\qquad \textrm{ for all } R\in\mathcal{K}_{\alpha}.
$$
While this gives a complete theoretical answer to the simple hypothesis testing problem, generally speaking there is no explicit formula for the constant $c_\alpha$.
The main contribution of \cite{CialencoXu2013a} was to find computable rejection regions, and the appropriate class of tests, by so called asymptotic approach.
The authors study two asymptotic regimes: large time asymptotics, while fixing the number of Fourier modes $N$, and large number of Fourier modes, while time horizon is fixed.
We will outline here the case of large time asymptotics.
Let $(R_T^\sharp)_{T\in\bR_+}$ and $\mathcal{K}_{\alpha}^\sharp$ be defined as follows:
\begin{align*}
\mathcal{K}_{\alpha}^\sharp & =\left\{(R_T): \limsup_{T\to\infty}\left(\bP^{N,T}_{\theta_0}(R_T)-\alpha\right)\sqrt{T}\le\alpha_1\right\},  \\
R_T^\sharp & =\left\{U_T^N: L(\theta_0,\theta_1,U_T^N)\ge c^\sharp_{\alpha}(T)\right\},\\
c^\sharp_{\alpha}(T) &=\exp\left(-\frac{(\theta_1-\theta_0)^2}{4\theta_0}MT-\frac{\theta_1^2-\theta_0^2} {2\theta_0}\sqrt{\frac{MT}{2\theta_0}}q_{\alpha}\right), \\
M & =\sum_{k=1}^N\lambda_k^{2\beta},
\end{align*}
where $q_\alpha$ is $\alpha$-quantile of standard Gaussian distribution, and $\alpha_1$ is a constant that depends on $\alpha$.
The class $\cK_\alpha^\sharp$ essentially consists of tests with Type~I errors converging to $\alpha$ from above with rate at least $\alpha_1T^{-1/2}$.
It was proved that
\begin{align}\label{eq:asym-def}
\underset{T\to\infty}{\liminf} \frac{1-\bP^{N,T}_{\theta_1}(R_T)}{1-\bP^{N,T}_{\theta_1}(R_{T}^\sharp)}\ge1,\qquad \textrm{ for all } (R_T)_{T\in\bR_+}\in\mathcal{K}_{\alpha}^\sharp.
\end{align}
In other words, $R_T^\sharp$ has the fastest rate of convergence of the Type~II error, as $T\to\infty$, in the class $\mathcal{K}_{\alpha}^\sharp$.
We proved analogous  results for $N\to\infty$, and $T$ being fixed, by taking
\begin{align*}
{R}_N^\sharp & =\left\{U_T^N: L(\theta_0,\theta_1,U_T^N)\ge \widetilde{c}_{\alpha}(N)\right\}, \quad N\in\bN, \\
\widetilde{\cK}_\alpha^\sharp & = \left\{(R_N): \limsup_{N\to\infty}\left(\bP^{N,T}_{\theta_0}(R_N)-\alpha\right)\sqrt{M}\le\widetilde{\alpha}_1\right\},
\end{align*}
where $\widetilde{c}_{\alpha}(N)$ is a constant depending on $N$ and $\alpha$ only, and $\widetilde{\alpha}_1$ is a constant that depends on $\alpha$.
We refer the reader to \cite{CialencoXu2013a} for further details.

However, by their very  nature of being asymptotic type results,  one cannot assess how large $T$ (or $N$) shall be taken to guarantee that the error is smaller than a desired tolerance. \textit{The main goal of this manuscript is to investigate the corresponding error estimates for fixed values of $T$ and $N$}.

Let us start with some heuristic discussion on why for the tests $R^\sharp_T$ and $R^\sharp_N$ one cannot easily find computable  expressions for $T$ or $N$ that will guarantee  certain bounds on statistical errors.
As it was shown in \cite[Lemma 3.13]{CialencoXu2013a}, for sufficiently large $T$, we have the following asymptotic expansion under the null hypothesis $\mathscr{H}_0$:
\begin{align*}
\bP^{N,T}_{\theta_0}(R_T^\sharp)=\alpha+\alpha_1T^{-1/2}+O(T^{-1}).
\end{align*}
Hence, for $T$ large enough, we will have the estimate
\begin{align*}
\left|\bP^{N,T}_{\theta_0}(R_T^\sharp)-\alpha\right|\le C_1T^{-1/2},
\end{align*}
where $C_1$ is a constant independent of $T$.
Similarly (cf. \cite[Lemma 3.21]{CialencoXu2013a}), we have the asymptotic expansions
\begin{align*}
\bP^{N,T}_{\theta_0}({R}_N^\sharp)=&\alpha+\widetilde{\alpha}_1M^{-1/2}+o(M^{-1/2}), \quad \textrm{ if } \ \beta/d>1/2, \\
\bP^{N,T}_{\theta_0}({R}_N^\sharp)=&\alpha+ \left(\widetilde{\alpha}_1+\sqrt{\frac{2\beta/d+1}{c^{\beta}}}\widetilde{\alpha}_2\right)M^{-1/2}+o(M^{-1/2}), \quad \textrm{ if } \ \beta/d=1/2.
\end{align*}
Since $\lambda_k\sim k^{1/d}$, for $\beta/d\ge1/2$, we get
\begin{align*}
\left|\bP^{N,T}_{\theta_0}( {R}_N^\sharp)-\alpha\right|\le C_2N^{-\beta/d-1/2},
\end{align*}
where $C_2$ is a constant independent of $N$.

Due to lack of knowledge  of the behavior of higher order terms in the above asymptotics, practically speaking, the above constants $C_1$ and $C_2$ cannot be easily determined.
The case of large Fourier modes is especially intricate, since the asymptotic expansion of Type~I error is done in terms of $M$ rather than $N$.
\textit{To overcome this technical problem, we propose a new test, which may not be asymptotically the most powerful, but which is convenient for the errors' estimation.
Moreover, we validate the obtained results by numerical simulations.}

\subsection{Sharp Large Deviation Principle}\label{sec:1-2}

The main results presented in this paper, and the ideas behind them, rely on some results on sharp large deviation bounds obtained in \cite{CialencoXu2013a}.
While the sharp deviations results for large time asymptotics $T\to\infty$ are comparable in certain respects with those from Stochastic ODEs (cf. \cite{BercuRouault2001,KutoyantsBook2004,Linckov1999}), the results for large number of Fourier modes $N\to\infty$ are new, and by analogy we refer to them also as sharp large deviation principle.
For convenience, we will briefly present some of needed results here too.

Generally speaking, we seek asymptotics expansion of the form
\begin{align*}
T^{-1}\ln\bE_{\theta}\left[\exp\left(\epsilon\ln L(\theta_0,\theta_1,U_T^N)\right)\right]
=\mathcal{L}(\epsilon)+T^{-1}\mathcal{H}(\epsilon)+T^{-1}\mathcal{R}(\epsilon),
\end{align*}
for $\theta=\theta_0$ or $\theta = \theta_1$, and where $\cL, \ \cH$ are some explicit function of $\epsilon, N, \theta_0,\theta_1$, and $\cR$ is a residual term.
Similarly, we are looking for asymptotic expansion of $M^{-1}\ln\bE_{\theta_1}\left[\exp\left(\epsilon\ln L(\theta_0,\theta_1,U_T^N)\right)\right]$, while $T$ is fixed.
With these at hand, we find a convenient representation of probabilities
$$
\bP^{N,T}_{\theta_j}\left(\ln L(\theta_0,\theta_1,U_T^N) \leq (\textrm{or}\ge) \varpi\right), \quad j=0,1,
$$
where $\varpi$ has the form $\eta T$ or $\eta M$ for some constant $\eta$.
Below we will present the explicit expressions for functions $\cL,\cH,\cR$.
Albeit the formulas are somehow cumbersome, their particular form is less important at this stage.

Along these lines, we adapt the  notations
\begin{align*}
  \mathcal{L}_T^j(\epsilon):=&T^{-1}\ln\bE_{\theta_j}\left[\exp\left(\epsilon\ln L(\theta_0,\theta_1,U_T^N)\right)\right],  \\
  \mathcal{L}_N^j(\epsilon):=&M^{-1}\ln\bE_{\theta_j}\left[\exp\left(\epsilon\ln L(\theta_0,\theta_1,U_T^N)\right)\right],
\end{align*}
for $j=0,1$.
The following expansions hold true
\begin{align}\label{eq:MomentLHRSplit}
\mathcal{L}_T^j(\epsilon):=&M\mathcal{L}_j(\epsilon)+T^{-1}N\mathcal{H}_j(\epsilon)+T^{-1}\mathcal{R}_j(\epsilon), \\
\mathcal{L}_N^j(\epsilon):=&T\mathcal{L}_j(\epsilon)+NM^{-1}\mathcal{H}_j(\epsilon)+M^{-1}\mathcal{R}_j(\epsilon),
\end{align}
where $\epsilon>-\frac{\theta_j^2}{\theta_1^2-\theta_0^2}$, and where
\begin{align*}
\mathcal{L}_j(\epsilon)=&\frac{1}{2}\left(\theta_j+(\theta_1-\theta_0)\epsilon-
\sqrt{\theta_j^2+(\theta_1^2-\theta_0^2)\epsilon}\right),
\\
\mathcal{H}_j(\epsilon)=&
-\frac{1}{2}\ln\left(\frac{1}{2}+\frac{1}{2}\mathcal{D}_j(\epsilon)\right),\qquad \mathcal{D}_j(\epsilon)=\frac{\theta_j+(\theta_1-\theta_0)\epsilon}{\sqrt{\theta_j^2+(\theta_1^2-\theta_0^2)\epsilon}},
\\
\mathcal{R}_j(\epsilon)=&-\frac{1}{2}\sum_{k=1}^N\ln\left(1+\frac{1-\mathcal{D}_j(\epsilon)}{1+\mathcal{D}_j(\epsilon)} \exp\left(-2\lambda_k^{2\beta}T\sqrt{\theta_j^2+(\theta_1^2-\theta_0^2)\epsilon}\right)\right).
\end{align*}
Using these results, one can show that the following identities are satisfied,
\begin{align}
\bP^{N,T}_{\theta_j}&\left((-1)^j\ln L(\theta_0,\theta_1,U_T^N)\ge (-1)^j\eta T\right)=A_T^jB_T^j,\label{eq:ABSplit-RT}
\\
\bP^{N,T}_{\theta_j}& \left((-1)^j\ln L(\theta_0,\theta_1,U_T^N)\ge (-1)^j\eta M\right)=\widetilde{A}_N^j\widetilde{B}_N^j,\label{eq:ABSplit-RN}
\end{align}
with
\begin{align}\label{eq:ATBT-Form}
A_T^j & =\exp\left[T(\mathcal{L}_T^j(\epsilon_{\eta}^j)-\eta\epsilon_{\eta}^j)\right],\qquad \widetilde{A}_N^j= \exp\left[M(\mathcal{L}_N^j(\widetilde{\epsilon}_{\eta}^j) -\eta\widetilde{\epsilon}_{\eta}^j) \right], \notag
\\
B_T^j & =\bE_T^j\left(\exp\left[-\epsilon_{\eta}^j(\ln L(\theta_0,\theta_1,U_T^N)-\eta T)\right]\1_{\{(-1)^j\ln L(\theta_0,\theta_1,U_T^N)\ge (-1)^j\eta T\}}\right),\notag
\\
\widetilde{B}_N^j &=\bE_N^j\left(\exp\left[-\widetilde{\epsilon}_{\eta}^j(\ln L(\theta_0,\theta_1,U_T^N)-\eta M)\right]\1_{\{(-1)^j\ln L(\theta_0,\theta_1,U_T^N)\ge (-1)^j\eta M\}}\right),
\end{align}
where $\eta$ is a number which may depend on $T$ and $N$, $\epsilon_{\eta}^j$ and $\widetilde{\epsilon}_{\eta}^j$ are numbers which depend on $\eta$, $\bE_T^j$ and $\bE_N^j$ are the expectations under $\bQ_T^j$ and $\bQ_N^j$ respectively with
\begin{align}
\frac{d\mathbb{Q}_T^j}{d\bP^{N,T}_{\theta_j}}=&\exp\left(\epsilon_{\eta}^j\ln L(\theta_0,\theta_1,U_T^N)-T\mathcal{L}_T^j(\epsilon_{\eta}^j)\right),\label{eq:MeasureChange1}
\\
\frac{d\bQ_N^j}{d\bP^{N,T}_{\theta_j}}=&\exp\left(\widetilde{\epsilon}_{\eta}^j\ln L(\theta_0,\theta_1,U_T^N)-M\mathcal{L}_N^j(\widetilde{\epsilon}_{\eta}^j)\right).\label{eq:MeasureChange2}
\end{align}
By taking $\epsilon_{\eta}^j$ or $\widetilde{\epsilon}_{\eta}^j$ such that $M\mathcal{L}_j'(\epsilon_{\eta}^j)=\eta$ or $T\mathcal{L}_j'(\widetilde{\epsilon}_{\eta}^j)=\eta$, we got
\begin{align}
\epsilon_{\eta}^j=&\frac{(\theta_1^2-\theta_0^2)^2M^2-4\theta_j^2(-2\eta+(\theta_1-\theta_0)M)^2} {4(\theta_1^2-\theta_0^2)(-2\eta+(\theta_1-\theta_0)M)^2},\label{eq:epsilonT}
\\
\widetilde{\epsilon}_{\eta}^j=&\frac{(\theta_1^2-\theta_0^2)^2T^2-4\theta_j^2(-2\eta+(\theta_1-\theta_0)T)^2} {4(\theta_1^2-\theta_0^2)(-2\eta+(\theta_1-\theta_0)T)^2},\label{eq:epsilonN}
\end{align}
and then by direct computations we found that
\begin{align}
A_T^j=& \exp\left(-I_j(\eta)T\right) \exp\left[N\mathcal{H}_j(\epsilon_{\eta}^j)+\mathcal{R}_j(\epsilon_{\eta}^j)\right],\label{eq:ATVariate}
\\
\widetilde{A}_N^j=& \exp\left(-\widetilde{I}_j(\eta)M\right) \exp\left[N\mathcal{H}_j(\widetilde{\epsilon}_{\eta}^j)+\mathcal{R}_j (\widetilde{\epsilon}_{\eta}^j)\right],\label{eq:ANVariate}
\end{align}
where
\begin{align}
I_j(\eta)= -\frac{(4\theta_j\eta+(-1)^j(\theta_1-\theta_0)^2M)^2}{8(2\eta-(\theta_1-\theta_0)M) (\theta_1^2-\theta_0^2)},\qquad
\widetilde{I}_j(\eta)=-\frac{(4\theta_j\eta+(-1)^j(\theta_1-\theta_0)^2T)^2} {8(2\eta-(\theta_1-\theta_0)T)(\theta_1^2-\theta_0^2)}.\label{eq:Rate-I-TN}
\end{align}

Finally, also in \cite{CialencoXu2013a} we derived the large deviation principles for considered SPDEs
\begin{align}\label{LargeDev-Null}
&\lim_{T\to\infty}T^{-1}\ln \bP^{N,T}_{\theta_0}\left(T^{-1}\ln L(\theta_0,\theta_1,U_T^N)\ge\eta\right)=-I_0(\eta), &&\eta\in\left(-\frac{(\theta_1-\theta_0)^2}{4\theta_0}M,\frac{\theta_1-\theta_0}{2}M\right), \\
\label{eq:LDP-Linkov}
&\lim_{T\to\infty}T^{-1}\ln \bP^{N,T}_{\theta_1}\left(T^{-1}\ln L(\theta_0,\theta_1,U_T^N)\geq\eta\right)=-I_1(\eta), &&\eta\in\left(\frac{(\theta_1-\theta_0)^2}{4\theta_1}M,\frac{\theta_1-\theta_0}{2}M\right),
\end{align}

It should be mentioned that in \cite{CialencoXu2013a} the relations \eqref{eq:MomentLHRSplit}--\eqref{eq:LDP-Linkov} were derived only under the alternative hypothesis, $\theta=\theta_1$, however, the corresponding  results for  $\theta=\theta_0$ are obtained in a very similar manner.
The main difference is that $\theta_1$ in the PDE obtained by Feynman-Kac Formula is replaced by $\theta_0$, but the method of solving it remains of course the same.
We admit that some parts of these derivations may appear technically challenging, but nevertheless we felt unnecessary to mimic them here.


\section{The case of large times}\label{sec:largeT}
Throughout this section, we assume that  the number of Fourier modes $N$ is fixed.
Recall that without loss of generality we assume that $\theta_1>\theta_0$ (the obtained results are symmetric otherwise).
We still consider tests of the form $R_T=\{U_T^N: L(\theta_0,\theta_1,U_T^N)\ge c_\alpha(T)\}$, but for the sake of convenience we write them equivalently  as
\begin{align}\label{eq:RTtest-eta}
R_T=\{U_T^N: \ln L(\theta_0,\theta_1,U_T^N)\ge\eta T\},
\end{align}
where, unless specified, $\eta$ is an arbitrary number which may depend on $N$ and $T$.
Our goal is to find a proper expression for $\eta$ such that for $T$ larger than a certain number, the Type I and II errors are always smaller than a chosen threshold.
Clearly, we are looking for $\eta$ that is a  bounded function of $T$.
Using the results on large deviations from Section~\ref{sec:1-2}, we will first give an argument how to derive a proper expression of $\eta$, followed by main results and their detailed proofs.

Following the large deviation principle  \eqref{LargeDev-Null}, let us assume that $\eta$ is such that
\begin{align}\label{eq:Range-eta}
-\frac{(\theta_1-\theta_0)^2}{4\theta_0}M<\eta<\frac{\theta_1-\theta_0}{2}M.
\end{align}
Then, we have that $\epsilon_{\eta}^0>0$, and hence $B_T^0\leq 1.$
Consequently, in view of \eqref{eq:ABSplit-RT}, to get an upper bound for the Type~I error, it is enough to estimate $A_T^0$.
By \eqref{eq:ATVariate}, combined  with \eqref{LargeDev-Null}, we note that $\exp\left(-I_0(\eta)T\right)$ is the dominant term  of asymptotic expansion of Type~I error.
Since we have an explicit expression of the residual part $\exp\left[N\mathcal{H}_0(\epsilon_{\eta}^0)+\mathcal{R}_0(\epsilon_{\eta}^0)\right]$, this suggest that if we simply let the dominant part to be equal to the significance level $\alpha$, that is
\begin{align}\label{eq:Dom-Eq}
\exp\left(-I_0(\eta)T\right)=\alpha,
\end{align}
we may be able to control the Type I error by a much simpler function.
In fact, by solving equation \eqref{eq:Dom-Eq}, that has two solutions, and since $\eta$ has to satisfy \eqref{eq:Range-eta}, we choose
\begin{align}\label{ErrTest-gamma}
\eta=-\frac{(\theta_1-\theta_0)^2}{4\theta_0}M+\frac{(\theta_1^2-\theta_0^2)\ln\alpha}{2\theta_0^2T}+ \frac{\theta_1^2-\theta_0^2}{2\theta_0^2}\sqrt{-\theta_0MT^{-1}\ln\alpha+T^{-2}\ln^2\alpha}.
\end{align}
Clearly $\eta$ is a bounded function of $T$. Moreover, $\eta$ indeed satisfies \eqref{eq:Range-eta}, a point made clear by \eqref{Deltagamma} below.

Next we present the first main result of this paper that shows how large $T$ has to be so that the Type~I error is smaller than a given tolerance level.
\begin{theorem}\label{Firsterr-Est}
Assume that the test statistics has the form
\begin{align*}
R^0_T=\left\{U^N_T: \ln L(\theta_0,\theta_1,U_T^N)\ge\eta T\right\},
\end{align*}
where $\eta$ is given by \eqref{ErrTest-gamma}. If
\begin{align}\label{Req-1}
T\ge\max\left\{-\frac{256\theta_0\ln\alpha}{(\theta_1-\theta_0)^2M},-\frac{16\ln\alpha}{\theta_0 M}, -\frac{16(1+\varrho)^2\theta_0(\theta_1-\theta_0)^2(N+1)^2\ln\alpha}{\varrho^2(\theta_1+\theta_0)^4M}\right\},
\end{align}
then the Type~I error has the following bound estimate
\begin{align*}
\bP^{N,T}_{\theta_0}\left(R^0_T\right)\le(1+\varrho)\alpha,
\end{align*}
where $\varrho$ denotes a given threshold of error tolerance\footnote{Generally expected to be small, say less than 10\%. Smaller $\varrho$ will yield larger $T$, and the final choice is left to the observer.}.
\end{theorem}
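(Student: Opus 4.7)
The plan is to exploit the factorization \eqref{eq:ABSplit-RT} for $j=0$, which rewrites $\bP^{N,T}_{\theta_0}(R^0_T) = A_T^0 B_T^0$. The first step is to check that the choice \eqref{ErrTest-gamma} of $\eta$ lies in the admissible interval \eqref{eq:Range-eta}: the square-root term in \eqref{ErrTest-gamma} is strictly positive so $\eta$ sits strictly above the lower bound, and one verifies directly that the first entry in the maximum in \eqref{Req-1} forces $\eta$ to remain below $\tfrac{\theta_1-\theta_0}{2}M$. Hence $\epsilon_\eta^0>0$, and since the integrand defining $B_T^0$ in \eqref{eq:ATBT-Form} is at most $1$ on its support, we immediately obtain $B_T^0\leq 1$. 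The whole task thus reduces to bounding $A_T^0$.

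Next, by \eqref{eq:ATVariate}, $A_T^0 = \exp(-I_0(\eta)T)\cdot\exp\!\left[N\mathcal{H}_0(\epsilon_\eta^0)+\mathcal{R}_0(\epsilon_\eta^0)\right]$. Since \eqref{ErrTest-gamma} was obtained by solving \eqref{eq:Dom-Eq}, the dominant factor equals $\alpha$ exactly, so the assertion $\bP^{N,T}_{\theta_0}(R^0_T)\leq(1+\varrho)\alpha$ collapses to proving
\begin{equation*}
N\mathcal{H}_0(\epsilon_\eta^0)+\mathcal{R}_0(\epsilon_\eta^0)\;\leq\;\ln(1+\varrho).
\end{equation*}
I would split the right side into two pieces, say $\tfrac{1}{2}\ln(1+\varrho)$ each, and bound the two terms separately. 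The function $\mathcal{H}_0(\epsilon)=-\tfrac{1}{2}\ln\!\bigl(\tfrac{1+\mathcal{D}_0(\epsilon)}{2}\bigr)$ vanishes when $\mathcal{D}_0=1$, and plugging \eqref{ErrTest-gamma} into \eqref{eq:epsilonT} yields a closed-form expression from which one can show, under the first two conditions in \eqref{Req-1}, that $1-\mathcal{D}_0(\epsilon_\eta^0)$ is at most a constant multiple of $\sqrt{|\ln\alpha|/(\theta_0 MT)}$. Combined with the elementary inequality $-\ln((1+y)/2)\leq 1-y$ for $y\in[0,1]$, this produces $N\mathcal{H}_0(\epsilon_\eta^0)\leq C\,N\sqrt{|\ln\alpha|/(\theta_0 MT)}$; the third bound in \eqref{Req-1}, whose $(1+\varrho)^2(N+1)^2/\varrho^2$ factor is tailored precisely to this purpose, then forces this to fall below $\tfrac{1}{2}\ln(1+\varrho)$ via $\ln(1+\varrho)\geq \varrho/(1+\varrho)$.

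For $\mathcal{R}_0(\epsilon_\eta^0)$, each summand in its definition carries a decaying exponential $\exp(-2\lambda_k^{2\beta}T\sqrt{\theta_0^2+(\theta_1^2-\theta_0^2)\epsilon_\eta^0})$. Since under the first two conditions in \eqref{Req-1} the quantity under the square root stays bounded away from $0$, and since $\lambda_k$ is increasing in $k$, the inequality $-\ln(1-x)\leq 2x$ (for $x\in[0,1/2]$) together with a geometric-series estimate across $k=1,\dots,N$ bounds $|\mathcal{R}_0(\epsilon_\eta^0)|$ by a constant times $\exp(-2\lambda_1^{2\beta}T\theta_0)$, which is easily absorbed into the remaining $\tfrac{1}{2}\ln(1+\varrho)$ budget whenever the second bound in \eqref{Req-1} is in force.

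The main obstacle I anticipate is not conceptual but computational: tracking the universal constants through the expansion of $\mathcal{D}_0(\epsilon_\eta^0)$ with enough sharpness that the three lower bounds on $T$ come out with the exact constants $256$, $16$, and $16(1+\varrho)^2/\varrho^2$ appearing in \eqref{Req-1}. The intermediate formulas from \eqref{eq:epsilonT} and \eqref{ErrTest-gamma} are algebraically heavy, so the bulk of the work lies in performing these expansions cleanly while keeping the residual terms small enough to match the stated thresholds.
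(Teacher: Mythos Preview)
Your overall strategy matches the paper's exactly: factor $\bP_{\theta_0}^{N,T}(R_T^0)=A_T^0 B_T^0$, observe $B_T^0\le 1$ once $\epsilon_\eta^0>0$, use $\exp(-I_0(\eta)T)=\alpha$ by construction of $\eta$, and reduce everything to showing $\exp[N\mathcal{H}_0(\epsilon_\eta^0)+\mathcal{R}_0(\epsilon_\eta^0)]\le 1+\varrho$. Your treatment of $N\mathcal{H}_0$ is also essentially the paper's (the paper uses $\sqrt{1+x}<1+x/2$ followed by Bernoulli's inequality where you use $-\ln((1+y)/2)\le 1-y$, but these are interchangeable elementary devices).

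The one substantive difference is how $\mathcal{R}_0$ is handled. You plan to split the $\ln(1+\varrho)$ budget in half and control $|\mathcal{R}_0|$ through the exponential damping $\exp(-2\lambda_k^{2\beta}T\sqrt{\cdots})$ and a geometric sum. The paper instead notes that the first two entries of \eqref{Req-1} force $0<\epsilon_\eta^0<\theta_0/(\theta_1+\theta_0)$, and on that range $\mathcal{D}_0$ is decreasing with $\mathcal{D}_0(0)=1$, so $0<\mathcal{D}_0(\epsilon_\eta^0)<1$. Then each summand of $\mathcal{R}_0$ has the form $-\tfrac{1}{2}\ln(1+\text{positive})<0$, hence $\exp[\mathcal{R}_0(\epsilon_\eta^0)]<1$ outright. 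This costs nothing from the budget and allows the entire factor $1+\varrho$ to be spent on $\exp[N\mathcal{H}_0]$, which is exactly what makes the third entry in \eqref{Req-1} come out with the stated constant. Your route would still close, but halving the allowance for $N\mathcal{H}_0$ would inflate that threshold; the sign observation on $\mathcal{R}_0$ is the shortcut that dissolves your anticipated ``computational obstacle''. (Incidentally, your use of $-\ln(1-x)$ for the $\mathcal{R}_0$ summands has the sign backwards: with $\mathcal{D}_0\in(0,1)$ the coefficient $(1-\mathcal{D}_0)/(1+\mathcal{D}_0)$ is positive, so the argument of the logarithm exceeds~$1$.)
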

\begin{proof}
Let us consider
\begin{align}\label{Deltagamma}
\Delta\eta:=\eta+\frac{(\theta_1-\theta_0)^2}{4\theta_0}M
=&\frac{\theta_1^2-\theta_0^2}{\theta_0^2}\frac{-\theta_0MT^{-1}\ln\alpha}{-T^{-1}\ln\alpha+\sqrt{-\theta_0MT^{-1}\ln\alpha +T^{-2}\ln^2\alpha}}\notag
\\
\le&(\theta_1^2-\theta_0^2)\sqrt{-\theta_0^{-3}M\ln\alpha}T^{-1/2}.
\end{align}
Note that  $\Delta\eta>0$, which implies that $\eta >  -(\theta_1-\theta_0)^2M/4\theta-0$. Moreover, since $\Delta \eta \to 0$, as $T\to\infty$, we also have that $\eta<(\theta_1-\theta_0)M/2$, for sufficiently large $T$, and hence \eqref{eq:Range-eta} is satisfied.

Substituting \eqref{ErrTest-gamma} into \eqref{eq:epsilonT}, by direct evaluations, we deduce
\begin{align}\label{EpsGamma-Null-1}
\epsilon_{\eta}^0=&\frac{\theta_0(\theta_1^2-\theta_0^2)M\Delta\eta-\theta_0^2\Delta\eta^2} {(\theta_1^2-\theta_0^2)((\theta_1^2-\theta_0^2)M/(2\theta_0)-\Delta\eta)^2}\le\frac{\theta_0M\Delta\eta} {((\theta_1^2-\theta_0^2)M/(2\theta_0)-\Delta\eta)^2}.
\end{align}
By \eqref{Deltagamma} and \eqref{EpsGamma-Null-1}, we conclude that, if
\begin{align}\label{condition-1}
(\theta_1^2-\theta_0^2)\sqrt{-\theta_0^{-3}M\ln\alpha}T^{-1/2}\le(\theta_1^2-\theta_0^2)M/(4\theta_0),
\end{align}
then have the following estimate
\begin{align}\label{Epsgamma-Est}
0<\epsilon_{\eta}^0\le\frac{16\theta_0^3\Delta\eta} {(\theta_1^2-\theta_0^2)^2M}\le\frac{16\sqrt{-\theta_0^3\ln\alpha}} {(\theta_1^2-\theta_0^2)\sqrt{M}}T^{-1/2}.
\end{align}
A straightforward inspection of the derivative of $\mathcal{D}_0(\epsilon)$ implies that $\mathcal{D}_0(\epsilon)$ decreases for $\epsilon<\frac{\theta_0}{\theta_1+\theta_0}$,  and goes to 1, as $\epsilon\to0+$. Thus, using \eqref{Epsgamma-Est}, if \begin{align}\label{condition-2}
\frac{16\sqrt{-\theta_0^3\ln\alpha}} {(\theta_1^2-\theta_0^2)\sqrt{M}}T^{-1/2}<\frac{\theta_0}{\theta_1+\theta_0},
\end{align}
then we can guarantee that $0<\mathcal{D}_0(\epsilon_{\eta}^0)<1$. From here, under assumption that  \eqref{condition-1} and \eqref{condition-2} hold true, we have
\begin{align}\label{Estimate-1}
\exp\left[\mathcal{R}_0(\epsilon_{\eta}^0)\right]=\prod_{k=1}^N\left(1+\frac{1-\mathcal{D}_0(\epsilon_{\eta}^0)}{1+\mathcal{D}_0(\epsilon_{\eta}^0)} \exp\left(-2\lambda_k^{2\beta}T\sqrt{\theta_0^2+(\theta_1^2-\theta_0^2)\epsilon_{\eta}^0}\right)\right)^{-1/2}<1.
\end{align}
Due to the fact that $\sqrt{1+x}<1+x/2$, we get
\begin{align*}
\mathcal{D}_0(\epsilon_{\eta}^0)\ge\frac{\theta_0+(\theta_1-\theta_0)\epsilon_{\eta}^0}{\theta_0+(\theta_1^2-\theta_0^2)\epsilon_{\eta}^0/(2\theta_0)}. \end{align*}
Therefore, under \eqref{condition-1} and \eqref{condition-2}, we obtain
\begin{align*}
\mathcal{D}_0(\epsilon_{\eta}^0)-1\ge&-\frac{(\theta_1-\theta_0)^2}{2\theta_0 \left(\theta_0+(\theta_1^2-\theta_0^2)\epsilon_{\eta}^0/(2\theta_0)\right)} \epsilon_{\eta}^0
\ge -\frac{(\theta_1-\theta_0)^2}{\theta_0 \left(\theta_1+\theta_0\right)} \epsilon_{\eta}^0 \\
\ge&-\frac{16(\theta_1-\theta_0)\sqrt{-\theta_0\ln\alpha}} {\left(\theta_1+\theta_0\right)^2\sqrt{M}}T^{-1/2}.
\end{align*}
From the above, and by means of Bernoulli inequality,  we continue
\begin{align}\label{Estimate-2}
\exp\left[N\mathcal{H}_0(\epsilon_{\eta}^0)\right]=&\left(1+\frac{1}{2}\left(\mathcal{D}_0(\epsilon_{\eta}^0)-1\right)\right)^{-N/2}\notag \\
\le&\left(1+\frac{1}{2}\left(\mathcal{D}_0(\epsilon_{\eta}^0)-1\right)\right)^{-\lfloor(N+1)/2\rfloor}\notag
\\
\le&\left(1+\frac{\lfloor(N+1)/2\rfloor}{2}\left(\mathcal{D}_0(\epsilon_{\eta}^0)-1\right)\right)^{-1}\notag
\\
\le&\left(1-\frac{4(N+1)(\theta_1-\theta_0)\sqrt{-\theta_0\ln\alpha}} {\left(\theta_1+\theta_0\right)^2\sqrt{M}}T^{-1/2}\right)^{-1}.
\end{align}
Note that the above inequalities hold true if all the terms in the parenthesis are positive, for which is enough to assume that
\begin{align}\label{extracond}
\frac{4(N+1)(\theta_1-\theta_0)\sqrt{-\theta_0\ln\alpha}} {\left(\theta_1+\theta_0\right)^2\sqrt{M}}T^{-1/2}<1.
\end{align}
Recall that $\epsilon_{\eta}^0>0$, and hence $B_T^0\leq 1$.
Using \eqref{eq:ABSplit-RT} and \eqref{eq:Dom-Eq}, combined with \eqref{Estimate-1} and \eqref{Estimate-2}, we conclude that
\begin{align*}
\bP^{N,T}_{\theta_0}\left(R^0_T\right)=A_T^0B_T^0\le \alpha\left(1-\frac{4(N+1)(\theta_1-\theta_0)\sqrt{-\theta_0\ln\alpha}} {\left(\theta_1+\theta_0\right)^2\sqrt{M}}T^{-1/2}\right)^{-1}.
\end{align*}
Thus, in order to make the Type I error to satisfy the desire upper bound $\bP^{N,T}_{\theta_0}\left(R^0_T\right)\le(1+\varrho)\alpha$, it is sufficient to require that \begin{align}\label{condition-3}
T\ge-\frac{16(1+\varrho)^2\theta_0(\theta_1-\theta_0)^2(N+1)^2\ln\alpha}{\varrho^2(\theta_1+\theta_0)^4M},
\end{align}
under assumption that \eqref{condition-1}, \eqref{condition-2} and \eqref{extracond} hold true, which is satisfied due to original assumption \eqref{Req-1}.
This concludes the proof.

\end{proof}

Next we will study the estimation of Type~II error, as time $T$ goes to infinity.

\begin{theorem}\label{Seconderr-Est}
Assume that the test $R^0_T$ is given as in Theorem \ref{Firsterr-Est}.
If
\begin{align}\label{Req-2}
T\ge\max\left\{-\frac{16(\theta_1^2+16\theta_0^2)\ln\alpha}{\theta_0(\theta_1-\theta_0)^2M},-\frac{16\ln\alpha}{\theta_0 M}, -\frac{16(1+\varrho)^2\theta_0(\theta_1-\theta_0)^2(N+1)^2\ln\alpha}{\varrho^2(\theta_1+\theta_0)^4M}\right\},
\end{align}
then the Type~II error admits the following upper bound estimate
\begin{align}\label{SecondEst}
1-\bP^{N,T}_{\theta_1}\left(R^0_T\right)\le(1+\varrho)\exp\left(-\frac{(\theta_1-\theta_0)^2}{16\theta_0^2}MT\right).
\end{align}
\end{theorem}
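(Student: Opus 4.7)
The plan is to adapt the proof of Theorem~\ref{Firsterr-Est} to the index $j=1$ in the decomposition \eqref{eq:ABSplit-RT}. Writing
\begin{align*}
1-\bP^{N,T}_{\theta_1}(R_T^0) \;=\; \bP^{N,T}_{\theta_1}\bigl(-\ln L(\theta_0,\theta_1,U_T^N) \ge -\eta T\bigr) \;=\; A_T^1 B_T^1,
\end{align*}
the task splits into estimating $A_T^1$ and $B_T^1$ separately.

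First I would verify that $\epsilon_\eta^1 \le 0$, which immediately gives $B_T^1 \le 1$: on the indicator event $\{\ln L(\theta_0,\theta_1,U_T^N) \le \eta T\}$ the exponent inside $B_T^1$ in \eqref{eq:ATBT-Form} is non-positive. A direct substitution of $\eta_0:=-(\theta_1-\theta_0)^2 M/(4\theta_0)$ into \eqref{eq:epsilonT} yields $\epsilon_{\eta_0}^1 = -1$; combining this with the bound \eqref{Deltagamma} on $\Delta\eta=\eta-\eta_0$ and with the second condition in \eqref{Req-2}, $\epsilon_\eta^1$ should remain in $(-1,0)$, safely above the singularity $-\theta_1^2/(\theta_1^2-\theta_0^2)$ at which $\mathcal{L}_1$ blows up.

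Next I would estimate $A_T^1 = \exp(-I_1(\eta) T)\exp\bigl[N\mathcal{H}_1(\epsilon_\eta^1) + \mathcal{R}_1(\epsilon_\eta^1)\bigr]$. A direct computation gives $I_1(\eta_0) = (\theta_1-\theta_0)^2 M/(4\theta_0)$, and a first-order expansion around $\eta_0$ shows $I_1(\eta) = I_1(\eta_0) - \Delta\eta + O((\Delta\eta)^2)$. Plugging in the $T^{-1/2}$ bound \eqref{Deltagamma} and invoking the first condition in \eqref{Req-2} should yield the cleaner lower bound $I_1(\eta) \ge (\theta_1-\theta_0)^2 M/(16\theta_0^2)$ required by \eqref{SecondEst}. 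For the residual factor, I would exploit the identity $\mathcal{D}_1(-1)=1$ together with $\mathcal{D}_1'(-1) = -(\theta_1-\theta_0)^2/(2\theta_0^2)$, giving $\mathcal{D}_1(\epsilon_\eta^1) = 1 + O(T^{-1/2})$ with $\mathcal{D}_1(\epsilon_\eta^1) < 1$. This matches the setup analyzed in \eqref{Estimate-1}--\eqref{Estimate-2}: $\mathcal{R}_1(\epsilon_\eta^1) \le 0$, and Bernoulli's inequality applied to $\exp[N\mathcal{H}_1(\epsilon_\eta^1)]$ delivers the bound $1+\varrho$ under the third condition in \eqref{Req-2}.

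The main technical obstacle will be bookkeeping with constants: the hybrid coefficient $\theta_1^2 + 16\theta_0^2$ appearing in the first term of \eqref{Req-2} must emerge precisely from absorbing the $O(\Delta\eta)$ correction to $I_1(\eta_0)$ and from passing from the sharp rate $(\theta_1-\theta_0)^2/(4\theta_0)$ to the more convenient $(\theta_1-\theta_0)^2/(16\theta_0^2)$ appearing in \eqref{SecondEst}. Once those estimates are lined up, multiplying the upper bound on $A_T^1$ by $B_T^1 \le 1$ immediately produces the claim.
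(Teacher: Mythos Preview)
Your overall strategy is correct and matches the paper's: decompose $1-\bP^{N,T}_{\theta_1}(R_T^0)=A_T^1 B_T^1$, show $\epsilon_\eta^1<0$ so that $B_T^1\le 1$, and bound the two factors of $A_T^1$ separately. The difference is that the paper exploits a shortcut you did not spot: from \eqref{eq:epsilonT} one has the exact identity $\epsilon_\eta^1=\epsilon_\eta^0-1$, and a direct substitution then shows $\mathcal{D}_1(\epsilon_\eta^1)=\mathcal{D}_0(\epsilon_\eta^0)$, hence $\mathcal{H}_1(\epsilon_\eta^1)=\mathcal{H}_0(\epsilon_\eta^0)$ and $\mathcal{R}_1(\epsilon_\eta^1)=\mathcal{R}_0(\epsilon_\eta^0)$. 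This lets the paper recycle the bound $\exp[N\mathcal{H}_0(\epsilon_\eta^0)+\mathcal{R}_0(\epsilon_\eta^0)]\le 1+\varrho$ from Theorem~\ref{Firsterr-Est} verbatim, with no new analysis of $\mathcal{D}_1$ near $-1$ required, and it gives $\epsilon_\eta^1<\theta_0/(\theta_1+\theta_0)-1<0$ for free from \eqref{Epsgamma-Est}. Your plan to expand $\mathcal{D}_1$ around $-1$ would of course reproduce the same estimates (since it is literally the same function after the shift), but it duplicates work. For the rate $I_1(\eta)$, the paper does not Taylor-expand; it simply imposes the extra smallness condition $\Delta\eta\le (\theta_1^2-\theta_0^2)(\theta_1-\theta_0)M/(4\theta_0\theta_1)$ (equivalent to $T\ge -16\theta_1^2\ln\alpha/(\theta_0(\theta_1-\theta_0)^2 M)$), which feeds into the explicit formula \eqref{eq:Rate-I-TN} to give $I_1(\eta)\ge (\theta_1-\theta_0)^2 M/(16\theta_0^2)$ directly. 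Summing this threshold with condition \eqref{condition-1} from the earlier proof produces the coefficient $\theta_1^2+16\theta_0^2$ in \eqref{Req-2}, which answers your bookkeeping concern.
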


\begin{proof}
Let $\eta$ be as in \eqref{ErrTest-gamma}. By direct evaluations, one can show that
\begin{align*}
\mathcal{H}_1(\epsilon_{\eta}^1)=\mathcal{H}_0(\epsilon_{\eta}^0),\qquad \mathcal{R}_1(\epsilon_{\eta}^1)=\mathcal{R}_0(\epsilon_{\eta}^0).
\end{align*}
Recall that, from the previous theorem, assuming that \eqref{Req-1} holds true, we have that
\begin{align}\label{est-1}
\exp\left[N\mathcal{H}_1(\epsilon_{\eta}^1)+\mathcal{R}_1(\epsilon_{\eta}^1)\right]= \exp\left[N\mathcal{H}_0(\epsilon_{\eta}^0)+\mathcal{R}_0(\epsilon_{\eta}^0)\right]\le1+\varrho.
\end{align}
In view of  \eqref{Deltagamma} and \eqref{eq:Rate-I-TN}, if we further require that
\begin{align}\label{condition-4}
(\theta_1^2-\theta_0^2)\sqrt{-\theta_0^{-3}M\ln\alpha}T^{-1/2}\le\frac{(\theta_1^2-\theta_0^2)(\theta_1-\theta_0)} {4\theta_0\theta_1}M,
\end{align}
it can be easily deduced that
\begin{align}\label{est-2}
\exp\left(-I_1(\eta)T\right)\le\exp\left(-\frac{(\theta_1-\theta_0)^2}{16\theta_0^2}MT\right).
\end{align}
By \eqref{Epsgamma-Est}, assuming that \eqref{condition-2} holds true, we also have that
\begin{align*}
\epsilon_{\eta}^1=\epsilon_{\eta}^0-1<\frac{\theta_0}{\theta_1+\theta_0}-1<0,
\end{align*}
and hence
\begin{align}\label{est-3}
B_T^1=\bE_T^1\left(\exp\left[-\epsilon_{\eta}^1(\ln L(\theta_0,\theta_1,U_T^N)-\eta T)\right]\1_{\{\ln L(\theta_0,\theta_1,U_T^N)\le\eta T\}}\right)<1.
\end{align}
Note that \eqref{eq:ABSplit-RT}-\eqref{eq:ATVariate} imply that
\begin{align*}
1-\bP^{N,T}_{\theta_1}\left(R^0_T\right)=& \bP^{N,T}_{\theta_1}\left(\ln L(\theta_0,\theta_1,U_T^N)\le\eta T\right)=A_T^1B_T^1
\\
=& \exp\left(-I_1(\eta)T\right) \exp\left[N\mathcal{H}_1(\epsilon_{\eta}^1)+\mathcal{R}_1(\epsilon_{\eta}^1)\right]B_T^1.
\end{align*}
Therefore, \eqref{SecondEst} follows from \eqref{est-1}, \eqref{est-2} and \eqref{est-3}, under assumption that \eqref{Req-1} and \eqref{condition-4} are satisfied,
which is guaranteed by \eqref{Req-2}.
This finishes the proof.
\end{proof}


\section{The case of large number of Fourier modes}\label{sec:largeN}
In this section we study the error estimates for the case of large number of Fourier modes $N$, while the time horizon $T$ is fixed.
The key ideas and the method itself are similar to those developed in the previous section.
We consider tests of the form
\begin{align}\label{eq:RNtest-zeta}
R_N=\{U_T^N: \ln L(\theta_0,\theta_1,U_T^N)\ge\zeta M\},
\end{align}
where $\zeta$ is some number depending on $N$ and $T$, and where as before $M:=\sum_{k=1}^N\lambda_k^{2\beta}$.
The goal is to find $\zeta$, as a bounded function of $N$, that will allow to controll the statistical errors when the number of Fourier modes $N$ is large.

Similarly to $T$-part, for $\zeta>-\frac{(\theta_1-\theta_0)^2}{4\theta_0}T$, we have that $\widetilde{\epsilon}_{\zeta}^0>0$, and hence $\widetilde{B}_N^0\le1$.
Thus, it is enough to estimate $\widetilde{A}_N^0$, and by the same reasons as in Section~\ref{sec:largeT}, we let $\exp\left(-\widetilde{I}_0(\zeta)M\right)=\alpha$, and derive that the natural candidate for $\zeta$ has the following form
\begin{align}\label{ErrTest-eta-N}
\zeta=-\frac{(\theta_1-\theta_0)^2}{4\theta_0}T+\frac{(\theta_1^2-\theta_0^2)\ln\alpha}{2\theta_0^2M}+ \frac{\theta_1^2-\theta_0^2}{2\theta_0^2}\sqrt{-\theta_0TM^{-1}\ln\alpha+M^{-2}\ln^2\alpha}.
\end{align}

Next we provide the result on how large $N$ should be (for a fixed $T$) to guarantee that Type~I and Type~II errors are smaller than a given tolerance level.
\begin{theorem}\label{Firsterr-Est-N}
Consider the test
\begin{align*}
R^0_N=\left\{U^N_T: \ln L(\theta_0,\theta_1,U_T^N)\ge\zeta M\right\},
\end{align*}
where $\zeta$ is given by \eqref{ErrTest-eta-N}.
\begin{enumerate}[(i)]
\item If
\begin{align}\label{Req-1-N}
M\ge -\frac{16\ln\alpha}{\theta_0T}\max\left\{\frac{16\theta_0^2}{(\theta_1-\theta_0)^2},1\right\} \quad\textrm{and}\quad \frac{M}{(N+1)^2}\ge -\frac{16(1+\varrho)^2\theta_0(\theta_1-\theta_0)^2\ln\alpha}{\varrho^2(\theta_1+\theta_0)^4T},
\end{align}
then the Type~I error has the following upper bound estimate
\begin{align}
\bP^{N,T}_{\theta_0}\left(R^0_N\right)\le(1+\varrho)\alpha,
\end{align}
where $\varrho$ denotes a given threshold of error tolerance.

\item If
\begin{align}\label{Req-2-N}
M\ge -\frac{16\ln\alpha}{\theta_0 T} \max\left\{\frac{(\theta_1^2+16\theta_0^2)}{(\theta_1-\theta_0)^2},1\right\} \quad\textrm{and}\quad \frac{M}{(N+1)^2}\ge -\frac{16(1+\varrho)^2\theta_0(\theta_1-\theta_0)^2\ln\alpha}{\varrho^2(\theta_1+\theta_0)^4T},
\end{align}
we have the following estimate for Type II error
\begin{align}\label{SecondEst-N}
1-\bP^{N,T}_{\theta_1}\left(R^0_N\right)\le(1+\varrho)\exp\left(-\frac{(\theta_1-\theta_0)^2}{16\theta_0^2}MT\right).
\end{align}
\end{enumerate}

\end{theorem}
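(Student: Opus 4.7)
The plan is to mimic, step by step, the proofs of Theorems~\ref{Firsterr-Est} and \ref{Seconderr-Est}, swapping the roles of $T$ and $M$ throughout. This works because the splitting \eqref{eq:ABSplit-RN} together with \eqref{eq:ANVariate} and \eqref{eq:Rate-I-TN} is structurally identical to the $T$-regime splitting: $\widetilde{I}_j(\zeta)$ has the same form as $I_j(\eta)$ after interchanging $(M,\eta)$ with $(T,\zeta)$, and $\widetilde{\epsilon}_\zeta^j$ from \eqref{eq:epsilonN} is the mirror of \eqref{eq:epsilonT}. The choice \eqref{ErrTest-eta-N} has been made precisely so that $\exp(-\widetilde{I}_0(\zeta)M)=\alpha$, paralleling \eqref{eq:Dom-Eq}.

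For part (i), I would first set $\Delta\zeta := \zeta + \frac{(\theta_1-\theta_0)^2}{4\theta_0}T$ and, exactly as in \eqref{Deltagamma}, obtain $0<\Delta\zeta \le (\theta_1^2-\theta_0^2)\sqrt{-\theta_0^{-3} T\ln\alpha}\,M^{-1/2}$. Plugging \eqref{ErrTest-eta-N} into \eqref{eq:epsilonN} yields the analogue of \eqref{EpsGamma-Null-1}–\eqref{Epsgamma-Est}, namely $0<\widetilde{\epsilon}_\zeta^0 \le \frac{16\sqrt{-\theta_0^3\ln\alpha}}{(\theta_1^2-\theta_0^2)\sqrt{T}}\,M^{-1/2}$, provided the first part of \eqref{Req-1-N} (the $M\ge -16\ln\alpha/(\theta_0 T)$ branch multiplied by the ratio factor) holds. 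This same bound puts $\widetilde{\epsilon}_\zeta^0$ below $\theta_0/(\theta_1+\theta_0)$, hence $0<\mathcal{D}_0(\widetilde{\epsilon}_\zeta^0)<1$, which immediately gives $\exp[\mathcal{R}_0(\widetilde{\epsilon}_\zeta^0)]<1$ just as in \eqref{Estimate-1}.

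The delicate step is the analogue of \eqref{Estimate-2}. Using $\sqrt{1+x}<1+x/2$ one gets $\mathcal{D}_0(\widetilde{\epsilon}_\zeta^0)-1 \ge -\frac{(\theta_1-\theta_0)^2}{\theta_0(\theta_1+\theta_0)}\widetilde{\epsilon}_\zeta^0$, and then the Bernoulli inequality on $(1+\tfrac{1}{2}(\mathcal{D}_0-1))^{-N/2}$ injects a factor of order $N\cdot M^{-1/2}$. To keep this factor below $\varrho/(1+\varrho)$, one is forced to require $M/(N+1)^2 \ge -16(1+\varrho)^2\theta_0(\theta_1-\theta_0)^2\ln\alpha/(\varrho^2(\theta_1+\theta_0)^4 T)$, which is exactly the second inequality in \eqref{Req-1-N}. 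Combining with $\widetilde{B}_N^0\le 1$ (since $\widetilde{\epsilon}_\zeta^0>0$) and $\widetilde{A}_N^0 = \alpha\exp[N\mathcal{H}_0(\widetilde{\epsilon}_\zeta^0)+\mathcal{R}_0(\widetilde{\epsilon}_\zeta^0)]$, one concludes $\bP^{N,T}_{\theta_0}(R^0_N)\le (1+\varrho)\alpha$.

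For part (ii), I would follow the template of Theorem~\ref{Seconderr-Est}. The identities $\mathcal{H}_1(\widetilde{\epsilon}_\zeta^1)=\mathcal{H}_0(\widetilde{\epsilon}_\zeta^0)$ and $\mathcal{R}_1(\widetilde{\epsilon}_\zeta^1)=\mathcal{R}_0(\widetilde{\epsilon}_\zeta^0)$ are purely algebraic in $\epsilon$ and survive the $T\leftrightarrow M$ swap, so the bound $\exp[N\mathcal{H}_1(\widetilde{\epsilon}_\zeta^1)+\mathcal{R}_1(\widetilde{\epsilon}_\zeta^1)]\le 1+\varrho$ follows from part (i) under \eqref{Req-2-N}. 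The dominant factor is now $\exp(-\widetilde{I}_1(\zeta)M)$; substituting \eqref{ErrTest-eta-N} into \eqref{eq:Rate-I-TN} and requiring the analogue of \eqref{condition-4}, namely $\Delta\zeta\le (\theta_1^2-\theta_0^2)(\theta_1-\theta_0)T/(4\theta_0\theta_1)$, yields $\widetilde{I}_1(\zeta)\ge (\theta_1-\theta_0)^2 T/(16\theta_0^2)$; this is the origin of the factor $\theta_1^2+16\theta_0^2$ in \eqref{Req-2-N}. Since $\widetilde{\epsilon}_\zeta^1 = \widetilde{\epsilon}_\zeta^0 - 1<0$, we also have $\widetilde{B}_N^1<1$, and assembling these three estimates via \eqref{eq:ABSplit-RN} gives \eqref{SecondEst-N}.

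The main obstacle is the coupling between $N$ and $M$ in the Bernoulli step: the $\sqrt{M}$-decay of $\widetilde{\epsilon}_\zeta^0$ must beat the linear-in-$N$ exponent in $\mathcal{H}_0$, forcing a ratio-type condition on $M/(N+1)^2$ rather than a plain growth condition on $M$. Since $\lambda_k\sim k^{1/d}$ gives $M\sim N^{2\beta/d+1}$, this ratio condition is nontrivial only when $\beta/d<1/2$, and is automatic once the spectrum is sufficiently singular; but the bookkeeping is the genuinely new content beyond transcribing the $T$-proofs.
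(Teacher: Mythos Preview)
Your proposal is correct and follows exactly the route the paper intends: the authors explicitly omit the proof, stating that one simply exchanges $T$ with $M$ in the arguments of Theorems~\ref{Firsterr-Est} and \ref{Seconderr-Est}, and your write-up carries this out faithfully, including the identification of the $M/(N+1)^2$ condition as arising from the Bernoulli step in \eqref{Estimate-2}. One small correction to your closing commentary: the ratio condition is problematic for $\beta/d\le 1/2$ (not just $<1/2$), since then $M/(N+1)^2$ stays bounded and may never exceed the required threshold---the paper flags this case as requiring a separate, more technical argument deferred to future work.
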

The proof is similar\footnote{For most of the derivations one just needs to `exchange $T$ with $M$.' The results are, in a sense, symmetric with respect to $T$ and $M$. In \eqref{Req-1-N} and \eqref{Req-2-N} we separate the conditions for $N$ into two inequalities, since we want to place all the terms related to $N$ on the left side of the inequalities.} to the proofs of Theorem~\ref{Firsterr-Est} and Theorem~\ref{Seconderr-Est}, and we omit it here\footnote{We need to point out that sometimes we may not be able to find $N$ such that the conditions \eqref{Req-1-N} and \eqref{Req-2-N} are satisfied. For example, if $\beta/d\le 1/2$ then $M/(N+1)^2$ is bounded for all $N\in\bN$, and if its bound is smaller than the right hand side of the second inequality in \eqref{Req-1-N} and \eqref{Req-2-N}, then the conditions \eqref{Req-1-N} and \eqref{Req-2-N} fail for all $N$. However, for $\beta/d\le 1/2$ we might still be able to control the Type~I and Type~II errors by finite $N$, which requires a more technical proof and is deferred to future study.}.

\section{Numerical Experiments}\label{sec:NumericalSim}
In this section we give a simple illustration of theoretical results from previous sections by means of numerical simulations.
Besides showing the behavior of Type~I and Type~II errors for the test $R^0$ proposed in this paper, we will also display the simulation results for $R^\sharp$ test mentioned in Section~\ref{sec:1-1} and discussed in \cite{CialencoXu2013a}.
We start with description of the numerical scheme used for simulation of trajectories of the solution (more precisely of the Fourier modes), and provide a brief argument on the error estimates of the corresponding Monte Carlo experiments associated with this scheme.
In the second part of the section, we focus on numerical interpretation of the theoretical results obtained in Sections~\ref{sec:largeT} and \ref{sec:largeN}.



We use the standard Euler-Maruyama scheme\footnote{Of course many other discretizations of equation \eqref{eq:mainSPDE} can be chosen, such as implicit Euler scheme, or exponential Euler scheme, that can be computationally more efficient; cf. the monograph \cite{JentzenKloeden2011Book}.}
to numerically approximate the trajectories of the Fourier modes $u_k(t)$ given by equation \eqref{eq:OU-Fourier}, and we apply Monte Carlo method to estimate the Type~I and Type~II errors. We partition the time interval $[0,T]$ into $n$ equality spaced time intervals $0=t_0<t_1<\ldots<t_n=T$, with $\Delta T=T/n = t_{i}-t_{i-1}$, for $1\le i\le n$.
Let $m$ denote the number of trials in the Monte Carlo experiment of each Fourier mode.
 Assume that $u_k^j(t_i)$ is the true value of the $k$-th Fourier mode at time $t_i$ of the $j$-th trial in Monte Carlo simulation.
Then, for every $1\leq k\leq N$, $1\leq j \leq m$, we approximate $u_k^j(t_i)$ according to the following recursion formula
\begin{align}\label{eq:OU-recursion}
\widetilde{u}_k^j(t_{i}) = \widetilde{u}_k^j(t_{i-1}) - \theta \lambda_k^{2\beta} \widetilde{u}_k^j(t_{i-1}) \Delta T + \sigma \lambda_k^{-\gamma} \xi_{k,i}^j,\qquad \widetilde{u}_k^j(t_0)= u_k(0), \quad 1\leq i \leq n.
\end{align}
where $\xi_{k,i}^j$ are i.i.d. Gaussian random variables with zero mean and variance $\Delta T$.
In what follows, we will investigate how to approximate the Type~I and Type~II errors of $R^0$ test using $\widetilde{u}_k^j(t_{i})$'s, and how the numerical errors are related to $n$, $m$, $T$ and $N$.

Throughout this section we consider equation \eqref{eq:mainSPDE}, and consequently \eqref{eq:OU-recursion}, with $\beta=1$, in one dimensional space $d=1$, with the random forcing term being the space-time white noise $\gamma=0, \ \sigma=1$.
We also assume that the spacial domain $G=[0,\pi]$ and the initial value $U_0=0$. In this case $\lambda_k=k, \ k\in\bN$. We fix the parameter of interest to be $\theta_0=0.1$ and $\theta_1=0.2$.
The general case is treated analogously, the authors feel that a complete and detailed analysis of the numerical results are beyond the scope of the current publication.
The numerical simulations presented here are intended to show a simple analysis of the proposed methods.
We performed simulations for other sets of parameters, and the numerical results were in concordance with the theoretical ones.
For example, for the case of large times, if one increases $N$, then the statistical errors are reaching the threshold for smaller values of $T$ - more information improves the rate of convergence.
Similarly, increasing $T$ for the case of asymptotics in $N$, one needs to take fewer Fourier modes to bypass the threshold of the statistical errors.
Different ranges and magnitudes of the parameter of interest $\theta$ were considered, and the outcomes are similar to those presented below.
All simulations and computations are done in MATLAB and the source code is available from the authors upon request.


\subsection{Description and analysis of the numerical experiments}\label{sec:NumSchemeT}

Throughout $C$ denotes a constant, whose value may vary from line to line, and whenever the formulas or results are indexed by $j$, we mean that they hold true for all $1\le j\le m$. Using \eqref{eq:RadonNikodymUn}, and by It\=o's formula, we get
\begin{align}\label{eq:heuristT}
\bP^{N,T}_{\theta_0}& (R_T^0) = \bP^{N,T}_{\theta_0} (\ln L(\theta_0,\theta_1,U_T^N)\ge \eta T) \notag
\\
=&\bP^{N,T}_{\theta_0}\left(-\sum_{k=1}^N\lambda_k^{2\beta+2\gamma}\left(\int_0^Tu_k(t)du_k(t) +\frac{\theta_1+\theta_0}{2\theta_0}\int_0^Tu_k\left(\sigma\lambda_k^{-\gamma}dw_k-du_k\right)\right)\ge \frac{\sigma^{2}\eta T}{\theta_1-\theta_0}\right)\notag
\\
=&\bP^{N,T}_{\theta_0}\left(\sum_{k=1}^N\lambda_k^{2\beta+2\gamma}\left(\frac{\theta_1-\theta_0}{2}\left(u_k^2(T)-\sigma^2\lambda_k^{-2\gamma}T\right)-(\theta_1+\theta_0)\sigma\lambda_k^{-\gamma}\int_0^Tu_kdw_k\right)\ge \frac{2\theta_0\sigma^{2}\eta T}{\theta_1-\theta_0}\right)\notag
\\
= &\bP^{N,T}_{\theta_0}\left(\frac{(\theta_1-\theta_0)}{2\sigma (\theta_1+\theta_0)\sqrt{T}}X_T-Y_T/\sqrt{T}\ge \frac{2\theta_0 \sigma \Delta\eta }{\theta_1^2-\theta_0^2 }\sqrt{T}\right),
\end{align}
where $\eta$ and $\Delta \eta$ are given by \eqref{ErrTest-gamma} and \eqref{Deltagamma} respectively, and
\begin{align*}
X_T:=\sum_{k=1}^N\lambda_k^{2\beta+2\gamma}u_k^2(T),\qquad Y_T:=\sum_{k=1}^N\lambda_k^{2\beta+\gamma}\int_0^Tu_kdw_k.
\end{align*}
We approximate $X_T$ and $Y_T$ as follows
\begin{align*}
\widetilde{X}_{n,T}^j:=\sum_{k=1}^N\lambda_k^{2\beta+2\gamma}\widetilde{u}_k^j(t_{n})^2,\qquad
\widetilde{Y}_{n,T}^j:=\sum_{k=1}^N\lambda_k^{2\beta+\gamma}\sum_{i=1}^n \widetilde{u}_k^j(t_{i-1}) \xi_{k,i}^j.
\end{align*}
Define
\begin{align*}
\widetilde{R}_{n,T}^{0,j}:=\left\{\frac{(\theta_1-\theta_0)}{2\sigma (\theta_1+\theta_0)\sqrt{T}}\widetilde{X}_{n,T}^j-\widetilde{Y}_{n,T}^j/\sqrt{T} \ge \frac{2\theta_0 \sigma \Delta\eta }{\theta_1^2-\theta_0^2 }\sqrt{T}\right\}.
\end{align*}
Then, naturally, the approximation of $\bP^{N,T}_{\theta_0}(R_T^0)$ is given by
\begin{align}\label{eq:NumEst-PT0}
\widetilde{\cP}_{\theta_0}^{m,n,N,T}(R_T^0):= \frac{1}{m}\sum_{j=1}^m \1_{\widetilde{R}_{n,T}^{0,j}}.
\end{align}
Following \cite[Chapter 8]{Bishwal2008}, one can prove that
\begin{align}\label{eq:XY-T-L2dist}
\bE\left|\left(Y_T-\widetilde{Y}_{n,T}^j\right)/\sqrt{T}\right|^2=O(\Delta T),\qquad
\bE\left|X_T-\widetilde{X}_{n,T}^j\right|= O(\Delta T).
\end{align}
Consequently, for any $\epsilon>0$, we have
\begin{align*}
\bP^{N,T}_{\theta_0}\left(\widetilde{R}_{n,T}^{0,j}\right)\le& \bP^{N,T}_{\theta_0}\left(\frac{(\theta_1-\theta_0)}{2\sigma (\theta_1+\theta_0)\sqrt{T}}X_T-Y_T/\sqrt{T}\ge \frac{2\theta_0 \sigma \Delta\eta }{\theta_1^2-\theta_0^2 }\sqrt{T}-\epsilon\right)
\\
&+\bP^{N,T}_{\theta_0}\left(\left|Y_T-\widetilde{Y}_{n,T}^j\right|/\sqrt{T}\ge\epsilon/2\right)+ \bP^{N,T}_{\theta_0}\left(\frac{(\theta_1-\theta_0)}{2\sigma (\theta_1+\theta_0)\sqrt{T}}\left|X_T-\widetilde{X}_{n,T}^j\right|\ge\epsilon/2\right).
\end{align*}
According to \cite[Lemma~3.13]{CialencoXu2013a},  for large enough $T$, the following estimate holds true
\begin{align*}
\bP^{N,T}_{\theta_0}\left(\frac{(\theta_1-\theta_0)}{2\sigma (\theta_1+\theta_0)\sqrt{T}}X_T-Y_T/\sqrt{T}\ge \frac{2\theta_0 \sigma \Delta\eta }{\theta_1^2-\theta_0^2 }\sqrt{T}-\epsilon\right)\le \bP^{N,T}_{\theta_0} (R_T^0)(1+C\epsilon).
\end{align*}
By the above results, and Chebyshev inequality, we conclude that
\begin{align*}
\bP^{N,T}_{\theta_0}\left(\widetilde{R}_{n,T}^{0,j}\right)\le\bP^{N,T}_{\theta_0} (R_T^0)(1+C\epsilon)+C\epsilon^{-1}\bE\left|X_T-\widetilde{X}_{n,T}^j\right|/\sqrt{T}+ C\epsilon^{-2}\bE\left|\left(Y_T-\widetilde{Y}_{n,T}^j\right)/\sqrt{T}\right|^2.
\end{align*}
Similarly, we have that
\begin{align*}
\bP^{N,T}_{\theta_0}\left(\widetilde{R}_{n,T}^{0,j}\right)\ge \bP^{N,T}_{\theta_0} (R_T^0)(1-C\epsilon)-C\epsilon^{-1}\bE\left|X_T-\widetilde{X}_{n,T}^j\right|/\sqrt{T}- C\epsilon^{-2}\bE\left|\left(Y_T-\widetilde{Y}_{n,T}^j\right)/\sqrt{T}\right|^2.
\end{align*}
Combining the above two inequalities, we obtain that, for any $\epsilon>0$,
\begin{align*}
\left| \bP^{N,T}_{\theta_0}\left(\widetilde{R}_{n,T}^{0,j}\right)-\bP^{N,T}_{\theta_0} (R_T^0) \right|\le& C \epsilon\bP^{N,T}_{\theta_0} (R_T^0)+ C\epsilon^{-1}\bE\left|X_T-\widetilde{X}_{n,T}^j\right|/\sqrt{T}
\\
&+ C\epsilon^{-2}\bE\left|\left(Y_T-\widetilde{Y}_{n,T}^j\right)/\sqrt{T}\right|^2.
\end{align*}
This implies that
\begin{align}\label{eq:ErrOrder-RTn}
\left| \bP^{N,T}_{\theta_0}\left(\widetilde{R}_{n,T}^{0,j}\right)-\bP^{N,T}_{\theta_0} (R_T^0) \right|\le C_0\Delta T^{1/3},
\end{align}
where $C_0$ is a constant, which is small as long as $\bP^{N,T}_{\theta_0} (R_T^0)$ is small.
It is straightforward to check that for large $T$
\begin{align*}
\textrm{Var}\left(\frac{(\theta_1-\theta_0)}{2\sigma (\theta_1+\theta_0)\sqrt{T}}X_T-Y_T/\sqrt{T}\right)\le C,
\end{align*}
where $C$ is a constant independent of $T$. From here and using \eqref{eq:XY-T-L2dist},  one can also show that
\begin{align*}
\textrm{Var}\left(\frac{(\theta_1-\theta_0)} {2\sigma (\theta_1+\theta_0)\sqrt{T}}\widetilde{X}_{n,T}^j-\widetilde{Y}_{n,T}^j/\sqrt{T}\right)=\textrm{Var}\left(\frac{(\theta_1-\theta_0)} {2\sigma (\theta_1+\theta_0)\sqrt{T}}X_T-Y_T/\sqrt{T}\right)+ O(\Delta T).
\end{align*}
This implies that the error of Monte Carlo simulations can be controlled by $m^{-1/2}$ uniformly with respect to $T$ and $n$. Therefore, we have the following error estimate
\begin{align}\label{eq:ErrEst-PmnNT}
\left|\widetilde{\cP}_{\theta_0}^{m,n,N,T}(R_T^0)-\bP^{N,T}_{\theta_0} (R_T^0)\right|\le C_1\Delta T^{1/3} + C_2 m^{-1/2},
\end{align}
which holds true with high probability (confidence interval of the Monte Carlo experiment).
Here $C_1$ is a constant which depends on $\bP^{N,T}_{\theta_0} (R_T^0)$ (usually small), and $C_2$ is a constant which only depends on the confidence level of Monte Carlo simulations. Thus, the estimator $\widetilde{\cP}_{\theta_0}^{m,n,N,T}(R_T^0)$ can be made arbitrarily close to the true value of $\bP^{N,T}_{\theta_0} (R_T^0)$ with arbitrarily high probability, as long as we take small enough time step $\Delta T$ and large enough number of trials $m$ of Monte Carlo simulations.

To approximate the value of $\bP^{N,T}_{\theta_0} (R_T^\sharp)$, similarly to \eqref{eq:heuristT}, we obtain
\begin{align*}
\bP^{N,T}_{\theta_0}& (R_T^\sharp)= \bP^{N,T}_{\theta_0}\left(\frac{(\theta_1-\theta_0)}{2\sigma (\theta_1+\theta_0)\sqrt{T}}X_T-Y_T/\sqrt{T}\ge -\sigma q_\alpha\sqrt{M/2\theta_0}\right),
\end{align*}
and we define
\begin{align*}
\widetilde{R}_{n,T}^{\sharp,j}:=\left\{\frac{(\theta_1-\theta_0)}{2\sigma (\theta_1+\theta_0)\sqrt{T}}\widetilde{X}_{n,T}^j-\widetilde{Y}_{n,T}^j/\sqrt{T} \ge -\sigma q_\alpha\sqrt{M/2\theta_0}\right\}.
\end{align*}
Then, the approximation of $\bP^{N,T}_{\theta_0}(R_T^\sharp)$ is given by
\begin{align}\label{eq:NumEst-PTsharp}
\widetilde{\cP}_{\theta_0}^{m,n,N,T}(R_T^\sharp):= \frac{1}{m}\sum_{j=1}^m \1_{\widetilde{R}_{n,T}^{\sharp,j}}.
\end{align}
Following the same proof we obtain error estimates similar to \eqref{eq:ErrEst-PmnNT} for $R_T^\sharp$.

Next we will present some numerical results that validate relationship \eqref{eq:ErrEst-PmnNT}.
In Table \ref{table:ConvergOrder-T}, we list simulation results of \eqref{eq:NumEst-PT0} for various value of the time step $\Delta T$ (or number of time steps $n$), while keeping fixed time horizon $T=100$, number of Monte Carlo simulations $m=20,000$, and number of Fourier modes $N=3$.
For convenience, we present same results in graphical form, Figure~\ref{Graph-TypIerr-T}.

\begin{table}[H]
\centering
\caption{Type~I error for various time steps $\Delta T$ (or number of time steps $n$)} 
\begin{tabular}{c c c c c c c c c c c c} 
\hline \\ [-1ex] 
$\Delta T$ & $1$ & $0.9$ & $0.8$ & $0.7$ & $0.6$ & $0.5$ & $0.4$ & $0.3$ & $0.2$ \\ [0.5ex]
\hline \\ [-2ex]
$n$ & $100$ & $111$ & $125$ & $143$ & $167$ & $200$ & $250$ & $333$ & $500$ \\ [0.5ex] 
\hline \\ [-1ex]
\small{$\widetilde{\cP}_{\theta_0}^{m,n,N,T}(R_T^0)$} & 0.0475 & 0.0375 & 0.0342 & 0.0283 & 0.0239 & 0.0202 & 0.0165 & 0.0157 & 0.0129 \\ [1ex] 
\hline \\ [-1ex]
\small{$\widetilde{\cP}_{\theta_0}^{m,n,N,T}(R_T^\sharp)$} & 0.0975 & 0.0897 & 0.0802 & 0.0746 & 0.0686 & 0.0620 & 0.0566 & 0.0515 & 0.0503 \\ [1ex] 

\hline\hline \\ [-0.5ex] 

$\Delta T$ & $0.1$ & $0.09$ & $0.08$ & $0.07$ & $0.06$ & $0.05$ & $0.04$ & $0.03$ & $0.02$ \\ [0.5ex]
\hline \\ [-2ex]
$n$ & $1000$ & $1111$ & $1250$ & $1429$ & $1667$ & $2000$ & $2500$ & $3333$ & $5000$ \\ [0.5ex] 
\hline \\ [-1ex]
\small{$\widetilde{\cP}_{\theta_0}^{m,n,N,T}(R_T^0)$} & 0.0102 & 0.0111 & 0.0099 & 0.0101 & 0.0096 & 0.0108 & 0.0089 & 0.0078 & 0.0088 \\ [1ex] 
\hline \\ [-1ex]
\small{$\widetilde{\cP}_{\theta_0}^{m,n,N,T}(R_T^\sharp)$} & 0.0453 & 0.0416 & 0.0443 & 0.0413 & 0.0428 & 0.0401 & 0.0421 & 0.0400 & 0.0385 \\ [1ex] 

\hline\hline \\ [-2ex] 
\end{tabular}
\\
\small{Other parameters: $m=2\times 10^4$, $\alpha=0.05$, $T=100$, $\theta_0=0.1$, $\theta_1=0.2$, \\ $N=3$,  $\varrho=0.1$, $d=\beta=\sigma=1, \ \gamma=0$.}
\label{table:ConvergOrder-T}
\end{table}


\begin{figure}[h]
\centering
\includegraphics[width=1.2\linewidth]{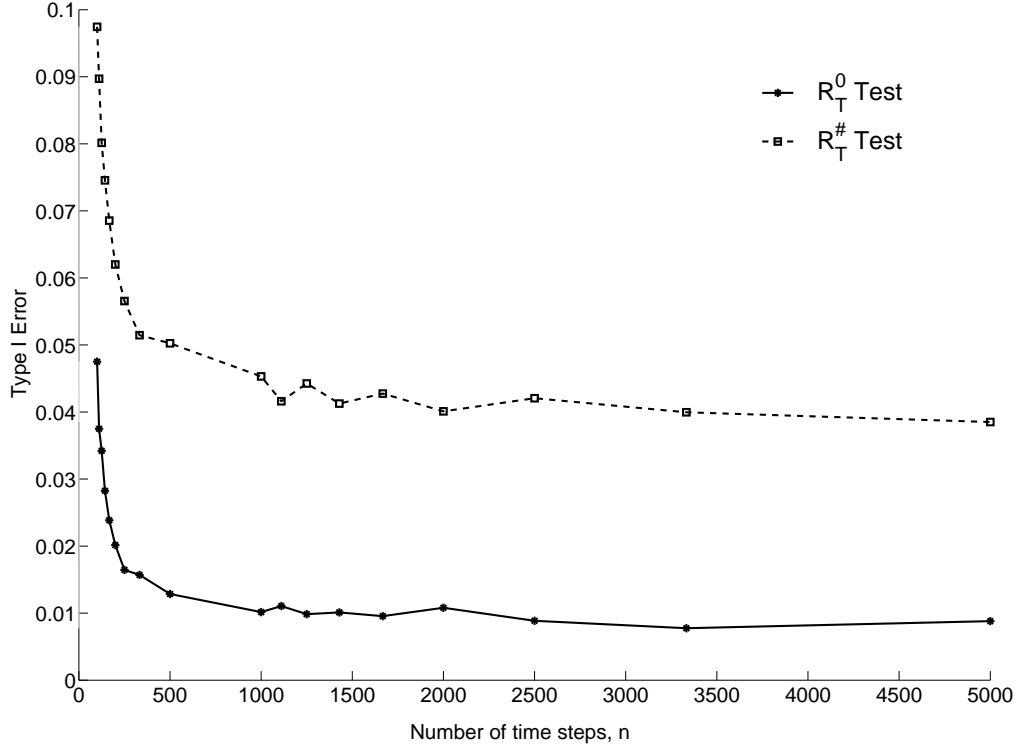}
\caption{Type~I error as a function of number of time steps $n$.
        \\ {\footnotesize Graphical interpretation of Table \ref{table:ConvergOrder-T}.}}
\label{Graph-TypIerr-T}
\end{figure}

As shown in Figure~\ref{Graph-TypIerr-T} the value of $\widetilde{\cP}_{\theta_0}^{m,n,N,T}(R_T^0)$, and respectively $\widetilde{\cP}_{\theta_0}^{m,n,N,T}(R_T^\sharp)$, rapidly decays  (approximatively up to the point when $n=5000$ or $\Delta T = 0.02$), and then it steadily approaches a certain `asymptotic level', which, as suggested by \eqref{eq:ErrEst-PmnNT}, shall be the true value of $\bP_{\theta_0}^{N,T}(R_{T}^0)$ (or $\bP_{\theta_0}^{N,T}(R_{T}^\sharp)$).
This assumes a reasonable large value of $m$, in our case $m=20,000$.
When $\Delta T$ gets smaller, we notice small fluctuations around that `asymptotic level', which are errors induced by the Monte Carlo method, and one can increase the number of trials to locate more precisely that true value. In our case the fluctuations are negligible comparative to the order of $\alpha$.

Now we fix the time horizon $T$, and vary the number of Fourier modes $N$.
Similarly to derivation of \eqref{eq:heuristT}, we have
\begin{align*}
\bP^{N,T}_{\theta_0}& (R_N^0)= \bP^{N,T}_{\theta_0}\left(\frac{(\theta_1-\theta_0)}{2\sigma (\theta_1+\theta_0)\sqrt{M}}X_T-Y_T/\sqrt{M}\ge \frac{2\theta_0 \sigma \Delta\zeta }{\theta_1^2-\theta_0^2 }\sqrt{M}\right),
\end{align*}
where
\begin{align*}
\Delta\zeta =\frac{(\theta_1^2-\theta_0^2)\ln\alpha}{2\theta_0^2M}+ \frac{\theta_1^2-\theta_0^2}{2\theta_0^2}\sqrt{-\theta_0TM^{-1}\ln\alpha+M^{-2}\ln^2\alpha}.
\end{align*}
Next, we define
\begin{align*}
\widetilde{R}_{n,N}^{0,j}:=\left\{\frac{(\theta_1-\theta_0)}{2\sigma (\theta_1+\theta_0)\sqrt{M}}\widetilde{X}_{n,T}^j-\widetilde{Y}_{n,T}^j/\sqrt{M} \ge \frac{2\theta_0 \sigma \Delta\zeta }{\theta_1^2-\theta_0^2 }\sqrt{M}\right\}.
\end{align*}
and approximate the probability $\bP^{N,T}_{\theta_0}(R_N^0)$ by
\begin{align}\label{eq:NumEst-PN0}
\widetilde{\cP}_{\theta_0}^{m,n,N,T}(R_N^0):= \frac{1}{m}\sum_{j=1}^m \1_{\widetilde{R}_{n,N}^{0,j}}.
\end{align}
One can prove\footnote{As usually, the case of large $N$ is more delicate and technically  challenging, comparative to the case of large times.
Apparently, \eqref{eq:XY-N-L2dist} holds true for some positive $\nu$. The sharpest value of $\nu$ is not relevant for this paper, and we defer the derivation of it to future study.}  that for some $\nu\ge 0$,
\begin{align}\label{eq:XY-N-L2dist}
\bE\left|\left(Y_T-\widetilde{Y}_{n,T}^j\right)/\sqrt{M}\right|^2=O(N^{\nu}/n),\qquad
\bE\left|X_T-\widetilde{X}_{n,T}^j\right|= O(N^{\nu}/n).
\end{align}
Following the same procedure as for large time asymptotics, we get
\begin{align}
\left|\widetilde{\cP}_{\theta_0}^{m,n,N,T}(R_N^0)-\bP^{N,T}_{\theta_0} (R_N^0)\right|\le C_1N^{\nu/3}n^{-1/3} + C_2 m^{-1/2},
\end{align}
where $C_1$ is a constant which depends on $\bP^{N,T}_{\theta_0} (R_N^0)$, and $C_2$ is a constant which depends on the confidence level of Monte Carlo experiment.

Similar results are obtained for the approximation of $\bP^{N,T}_{\theta_0} (R_N^\sharp)$ and the Type~II errors $\bP^{N,T}_{\theta_1} (R_N^0)$, $\bP^{N,T}_{\theta_1} (R_N^\sharp)$, $\bP^{N,T}_{\theta_1} (R_T^0)$ and $\bP^{N,T}_{\theta_1} (R_T^\sharp)$, and for brevity we will omit them here.

We conclude that the errors due to the numerical approximations considered above are negligible.
Hence, the  numerical methods  we propose are suitable for our purposes of computing the statistical errors of $R_T^0$, $R_T^\sharp$,  $R_N^0$ and $R_N^\sharp$ tests, and we  will use them for derivation of all numerical results from the next sections.


\subsection{Numerical tests for large times}
We start with the case of large times $T$ and fixed $N$, and the results discussed in Section~\ref{sec:largeT}.
We take that $N=3$, i.e. we observe one path of the first three Fourier modes of the solution $u$ over some time interval $[0,T]$.
For convenience, we denote by $T_b^1$, and respectively $T_b^2$, the lower bound thresholds for $T$ from Theorem~\ref{Firsterr-Est}, relation \eqref{Req-1}, and respectively Theorem~\ref{Seconderr-Est}, relation \eqref{Req-2}.
In Table~\ref{table:VeriTh2.1alpha}, we list the Type~I error $\bP_{\theta_0}^{N,T}\left(R_{T}^0\right)$, along with corresponding values of $T_b^1$, for various values of $\alpha$.  Note that for all values of $\alpha$, the Type~I error is smaller than the threshold $(1+\varrho)\alpha$, and as expected, being  on conservative side.

\begin{table}[H]
\centering
\caption{
$T=T_b^1$ given by Theorem \ref{Firsterr-Est} and Type~I error for various $\alpha$.
}
\begin{tabular}{c c c c c} 
\hline \\ [-1ex] 
$\alpha$ & 0.1 & 0.05 & 0.01 & 0.005 \\ [0.5ex] 
\hline \\ [-1ex] 
$T_b^1$ & 629 & 818 & 1258 & 1447 \\ [0.5ex] 
\hline \\ [-1ex]
$\bP_{\theta_0}^{N,T}\left(R_{T}^0\right)$ &  0.021 &  0.010 & 0.0025 &  0.0015 \\ [1ex] 

\hline\hline \\ [-2ex] 
\end{tabular}
\\
\small{Other parameters: $\theta_0=0.1$, $\theta_1=0.2$, $N=3$, \\ $\varrho=0.1$, $d=\beta=\sigma=1, \ \gamma=0$.}
\label{table:VeriTh2.1alpha}
\end{table}

In Table~\ref{table:VeriTh2.1T} we show that for $T\geq T_b^1$, the error remains smaller than the chosen bound.
In fact, the Type~I error is decreasing as $T$ gets larger, with all other parameters fixed.

\begin{table}[H]
\centering
\caption{Type~I error for various $T\geq T_b^1$, with $T_b^1$ as in Theorem \ref{Firsterr-Est}}
\begin{tabular}{c c c c c c c} 
\hline \\ [-1ex] 
$T$ & $T_b^1$ & $T_b^1+ T_\delta$ & $T_b^1+ 2T_\delta$ & $T_b^1+ 3T_\delta$ & $T_b^1+ 4T_\delta$ & $T_b^1+ 5T_\delta$ \\ [0.5ex] 
\hline \\ [-1ex]
$\bP_{\theta_0}^{N,T}\left(R_{T}^0\right)$ & 0.0100 & 0.0097 & 0.0105 & 0.0100 & 0.0105 & 0.0102 \\ [1ex] 
\hline \\ [-1ex]
$\bP_{\theta_0}^{N,T}\left(R_{T}^\sharp\right)$ & 0.0540 & 0.0525 & 0.0505 & 0.0526 & 0.0512 & 0.0505 \\ [1ex] 

\hline\hline \\ [-2ex] 
\end{tabular}
\\
\small{Other parameters: $T_\delta=500$, $\alpha=0.05$, $\theta_0=0.1$, $\theta_1=0.2$, \\ $N=3$,  $\varrho=0.1$, $d=\beta=\sigma=1, \ \gamma=0$.}
\label{table:VeriTh2.1T}
\end{table}

As already mentioned, the statistical test $R^\sharp_T$ derived in \cite{CialencoXu2013a}, while it is asymptotically the most powerful in $\cK^\sharp_\alpha$, it will not guarantee that the statistical errors will be below the threshold for a fixed finite $T$; only asymptotically it will be smaller than $\alpha$.
Indeed, as Table~\ref{table:VeriTh2.1T} shows, the Type~I error for $R^\sharp_T$ fluctuates around $\alpha=0.05$, with no pattern.
That was the very reason we proposed the tests $R^0$.

To illustrate the results from Theorem~\ref{Seconderr-Est}, and the behavior of Type~II error  $1-\bP_{\theta_1}^{N,T}\left(R_{T}^0\right)$, one needs to look at very large values of $T$, which is beyond our technical possibilities and the goal of this paper.
We will only give the results for some reasonable large values of $T$; see Table~\ref{table:VeriTh2.2T}. Note that indeed the Type~II error is decreasing as time $T$ gets larger. Also here, we show the corresponding results for the test $R^\sharp_T$.

\begin{table}[H]
\centering
\caption{Type~II errors for various $T$; Illustration of Theorem~\ref{Seconderr-Est}}
\begin{tabular}{c c c c c c c c c} 
\hline
$T$ & $10$ & $20$ & $30$ & $40$ & $50$ & $60$ \\ [0.5ex] 
\hline \\ [-1ex]
$\exp\left(-\frac{(\theta_1-\theta_0)^2}{16\theta_0^2}MT\right)$ & $1.6\times10^{-4}$ & $2.5\times 10^{-8}$ & $4\times 10^{-12}$ & $6\times 10^{-16}$ & $10^{-19}$ & $1.6\times 10^{-23}$ \\ [1ex] 
\hline \\ [-1ex]
$1-\bP_{\theta_1}^{N,T}\left(R_{T}^0\right)$ & 0.7155 & 0.3329 & 0.1148 & 0.0293 & 0.0070 & 0.0012 \\ [1ex]
\hline \\ [-1ex]
$1-\bP_{\theta_1}^{N,T}\left(R_{T}^\sharp\right)$ & 0.7946 &  0.2402 & 0.0457 & 0.0060 & 0.0006 & 0.0002 \\ [1ex]

\hline\hline \\ [-2ex] 
\end{tabular}
\\
\small{Other parameters: $\alpha=0.05$, $\theta_0=0.1$, $\theta_1=0.2$, $N=3$,  $\varrho=0.1$, $d=\beta=\sigma=1, \ \gamma=0$.}
\label{table:VeriTh2.2T}
\end{table}

\subsection{Numerical tests for large number of Fourier modes}

Now we do a similar analysis by varying number of Fourier coefficients $N$, while the time horizon $T=1$ is fixed.
As mentioned above, the case of large $N$ is much more delicate, and as it turns out, according to the numerical results presented below, the error bounds for the statistical errors from Theorem~\ref{Firsterr-Est-N} are on conservative side.
The decay of the errors obtained in our numerical simulations is much faster than suggested by theoretical results, which from practical point of view is a desired feature.

\begin{table}[H]
\centering
\caption{Type~I errors for various $N$; Theorem \ref{Firsterr-Est-N}}
\begin{tabular}{c c c c c c c c c c c c} 
\hline \\ [-1ex] 
$N$ & $10$ & $20$ & $30$ & $40$ & $50$ & $60$ & $70$ & $80$\\ [0.5ex] 
\hline \\ [-1ex]
$\bP_{\theta_0}^{N,T}\left(R_{N}^0\right)$ & 0.007 & 0.012 & 0.010 & 0.017 & 0.012 & 0.014 & 0.010 & 0.013 \\ [1ex] 
\hline \\ [-2ex] 
$\bP_{\theta_0}^{N,T}\left(R_{N}^\sharp\right)$ & 0.006 &  0.037 & 0.039 & 0.053 & 0.040 & 0.039 & 0.054 & 0.046 \\ [1ex] 
\hline\hline \\ [-2ex] 
\end{tabular}
\\
\small{Other parameters: $\alpha=0.05$, $\theta_0=0.1$, $\theta_1=0.2$, $T=1$,  $\varrho=0.1$, $d=\beta=\sigma=1, \ \gamma=0$.}
\label{table:VeriTh3.1}
\end{table}

\section{Concluding remarks}\label{sec:Conclusion}

\textsc{On discrete sampling.} Eventually, in real life experiments, the random field would be measured/sampled on a discrete grid, both in time and spatial domain.
It is true that the main results are based on continuous time sampling, and may appear as being mostly of theoretical interest.
However, as argued in the Section~\ref{sec:NumericalSim}, the main ideas of this paper and \cite{CialencoXu2013a} have a good prospect to be applied to the case of discrete sampling too. The error bounds of the numerical results presented herein contributes to the preliminary effort of studying the statistical inference problems for SPDEs in the discrete sampling  framework. At our best knowledge, there are no results on statistical inference for SPDEs with fully discretely observed data (both in time and space).
We outline here how to apply our results to discrete sampling, with strict proofs differed to our future studies.
If we assume that the first $N$ Fourier modes are observed at some discrete time points, then, to apply the theory presented here, one essentially has to approximate some integrals, including some stochastic integrals, convergence of each is well understood. Of course, the exact rates of convergence still need to be established.
The connection between discrete observation in space and the approximation of Fourier coefficients is more intricate. Natural way is to use discrete Fourier transform for such approximations. While intuitively clear that increasing the number of observed spacial points will yield to the computation of larger number of Fourier coefficients, it is less obvious, in our opinion, how to prove consistency of the estimators, asymptotic normality, and corresponding properties from hypothesis testing problem.

\bigskip
\noindent \textsc{On derivation of other tests.}
We want to mention that the (sharp) large deviations, appropriately used,  can lead to other practically important family of tests.
In fact, it is not difficult to observe that, if we take $R_T$ with
\begin{align*}
\eta\in\left(-\frac{(\theta_1-\theta_0)^2}{4\theta_0}M,\frac{(\theta_1-\theta_0)^2}{4\theta_1}M\right),
\end{align*}
then both Type I and Type II errors will go to zero, as $T\to\infty$.
Clearly, the motivation for doing this is to have both errors as small as possible.
Moreover, for such $\eta$ the statistical errors will go exponentially fast to zero.
Of course, this will not be the most powerful test in the sense of \cite{CialencoXu2013a}, since such chosen $\eta$ will reduce the exponential rate of convergence of Type II error. However, by shrinking the class of tests, one may preserve $R_T$ to be `asymptotically the most powerful' in the new class.
For example, once the asymptotical properties of errors are well understood, one can consider a new class of tests of the form
\begin{align*}
\cK_\alpha=\left\{(R_T): \limsup_{T\to\infty}\left(T^{\alpha_2} \exp\left(I(\eta)T+\eta T\right)\bP^{N,T}_{\theta_0}(R_T)-\alpha_0\right)T^{\alpha_3}\le\alpha_1\right\},
\end{align*}
where $\alpha_i$ ($0\le i\le 3$) are some parameters to be determined. Then, employing the same methodology as in \cite{CialencoXu2013a}, one can show that $R_T$ is the most powerful in $\cK_\alpha$, with only slight modification of some technical results. Similar ideas can lead to corresponding results for $N\to\infty$.

\bigskip
\noindent \textsc{On composite hypothesis.} Despite of the fact simple hypothesis testing problems are rarely used in practice, the efforts of this work, as well as those from  \cite{CialencoXu2013a}, should be seen as a starting point of a systematic study of general hypothesis testing problems and goodness of fit tests for stochastic evolution equation in infinite dimensional spaces.
As pointed out in \cite{CialencoXu2013a}, the developments of `asymptotic theory' for composite hypothesis testing problem will follow naturally, and consequently one can extend the results of this paper to the case of composite tests.

\section*{Acknowledgments}
We would like to thank the anonymous referees, the associate editor and the editor for their helpful comments and suggestions which improved greatly the final manuscript.
Igor Cialenco acknowledges support from the NSF grant DMS-1211256.

\newpage
{\small

\bibliographystyle{alpha}

\def\cprime{$'$}

}
\end{document}